\def\RSthmtxt{theorem~}\newref{thm}{name = \RSthmtxt}}
\def\RSlemtxt{lemma~}\newref{lem}{name = \RSlemtxt}}
\numberwithin{equation}{section}
\numberwithin{figure}{section}
\theoremstyle{plain}
\newtheorem{thm}{\protect\theoremname}
\theoremstyle{plain}
\newtheorem{prop}[thm]{\protect\propositionname}
\theoremstyle{plain}
\newtheorem{lem}[thm]{\protect\lemmaname}
\theoremstyle{plain}
\newtheorem{cor}[thm]{\protect\corollaryname}
\theoremstyle{remark}
\theoremstyle{plain}
\providecommand{\claimname}{Claim}
\providecommand{\corollaryname}{Corollary}
\providecommand{\lemmaname}{Lemma}
\providecommand{\propositionname}{Proposition}
\providecommand{\theoremname}{Theorem}
\DeclareMathOperator{\trace}{trace}
\DeclareMathOperator{\Ker}{Ker}
\DeclareMathOperator{\Range}{Range}
\DeclareMathOperator{\Span}{Span}
\begin{document}
\global\long\def\dom{B_{1}\setminus B_{\epsilon}}%
\global\long\def\thetf{\theta}%
\title{On optimal cloaking-by-mapping transformations}
\author{Yves Capdeboscq}
\address{Université de Paris and Sorbonne Université, CNRS, Laboratoire Jacques-Louis Lions (LJLL),
F-75006 Paris, France}
\email{yves.capdeboscq@u-paris.fr}
\author{Michael S. Vogelius}
\address{Department of Mathematics, Rutgers University, New Brunswick,  New Jersey 08901,  USA} 
\email{vogelius@math.rutgers.edu}
%\date{\empty}

\maketitle

\begin{abstract}
A central ingredient of cloaking-by-mapping is the diffeomorphisn which transforms an annulus with a small hole into an annulus with a finite size hole, while being the identity on the outer boundary of the annulus. The resulting meta-material is anisotropic, which makes it difficult to manufacture. The problem of minimizing anisotropy among radial transformations has been studied in \cite{gries-vogel}. In this work, as in \cite{gries-vogel}, we formulate the problem of minimizing anisotropy as an energy minimization problem. Our main goal is to provide strong evidence for the conjecture that for cloaks with circular boundaries, non-radial transformations do not lead to lower degree of anisotropy. In the final section, we consider cloaks with non-circular boundaries and show that in this case, non-radial cloaks may be advantageous, when it comes to minimizing anisotropy. 
\end{abstract}

\section{Introduction}

\noindent
A central ingredient in the construction of (approximate) cloaks by the passive cloaking technique, known as ``cloaking by mapping'', is the diffeomorphism, which transforms an annulus
with a small hole into an annulus with a finite size hole,
and which is the identity on the outer boundary of the annulus.
The push-forward of the background coefficient (say, the identity
matrix) with the diffeomorphism represents the meta-material
needed for the cloak, and the finite size hole is the area that may
be used as a ``hiding place'' \cite{ksvw}. The fact that the diffeomorphism is the identity on the outer boundary ensures that the perturbation in the ``far field'' is that corresponding to a small inhomogeneity. The corresponding ``lack of cloaking''/visibility can be estimated by the volume of the small inhomogeneity. The required meta-material
is anisotropic, which presents a problem when it comes to actual
manufacture of the cloak. Typically a radial affine transformation has been used \cite{greenlasuhl2, greenlasuhl, ksvw, MILT-CLOAK, pen-schu-smi},
however, a very natural question arises, namely : ``are there transformations
that lead to lower degree of
anisotropy than the radial affine transformation?\,'' In \cite{gries-vogel} it was shown that there are indeed better radial transformations than the affine, when it comes to minimizing anisotropy. In that paper the meta-material obtained by ``optimal radial transformation'' is also shown to be quite related to meta-materials obtained by other cloak enhancement strategies, employing additional layers \cite{ammari-kang, heu-vogel}.
The focus of this note is to produce very strong evidence for the
conjecture that when the cloak takes the shape of a classical annulus, non-radial transformations do not help in reducing
the degree of anisotropy. Like in \cite{gries-vogel}, we formulate the problem of minimizing
anisotropy as a variational problem (minimization of an appropriate
energy). Corollary \ref{cor:7} summarizes our main results. Broadly speaking, we show that 
\begin{itemize}
\item There exists a radial transformation, which is a stationary point for the energy.
\item This radial transformation has smaller energy than all other transformations
with ``directional field'' $\frac{x}{|x|}$.
\item If the amplitude is kept fixed and radial, then any change in the
``directional field'' away from $\frac{x}{|x|}$ will increase energy. 
\end{itemize}
In the final section of this note we consider the case when the outer (and inner) boundary of the cloak are not circles, and we illustrate how the optimal radial transformation for the circular case translates into a non-radial (optimal) transformation for a non-circular cloak.

\section{Preliminaries}

\noindent
For $r>0$ we set 
\[
B_{r}=\left\{ x\in\mathbb{R}^{2}\,:\,\left|x\right|<r\right\} ~,~\text{ and }~C_{r}=\left\{ x\in\mathbb{R}^{2}\,:\,\left|x\right|=r\right\} ~.
\]
Given $\epsilon>0$, we shall use the notation $\Phi$ for a bijective diffeomorphism $\overline{B_{1}}\setminus B_{\epsilon} \rightarrow \overline{B_{1}}\setminus B_{\frac{1}{2}} $ with  $\Phi\in C^{1}\left(\overline{B_{1}}\setminus B_{\epsilon};\overline{B_{1}}\setminus B_{\frac{1}{2}}\right)~,$ and $\Phi^{-1}\in C^{1}\left(\overline{B_{1}}\setminus B_{\frac{1}{2}};\overline{B_{1}}\setminus B_{\epsilon}\right)$.
We furthermore impose that 
\[
\left.\Phi\right|_{C_{1}}=Id~~,\text{ and }\Phi\left(C_{\epsilon}\right)=C_{\frac{1}{2}}~.
\]
One such transformation is the radial affine transformation, given by
\[
x\rightarrow\left(\frac{|x|-1}{2(1-\epsilon)}+1\right)\frac{x}{|x|}~.
\]
The push-forward of the identity matrix with the diffeomorphism $\Phi$
is given by
\[
\Phi_{\text{*}}\left[I\right](\Phi(x))=\frac{D\Phi D\Phi^{T}}{\left|\det D\Phi\right|}(x)~.
\]
This is a positive definite matrix, and since we are in two dimensions,
with determinant $1$. Let $0<\lambda_{1}(x)\le1\le \lambda_{2}(x)$ denote
the eigenvalues of $\Phi_{\text{*}}\left[I\right](\Phi(x))$. A natural
measure of the degree of anisotropy of $\Phi_{\text{*}}\left[I\right]$ at the point
$\Phi(x)$ is 
\begin{eqnarray*}
\left|\lambda_{1}(x)-1\right|+\left|\lambda_{2}(x)-1\right|=\lambda_{2}(x)-\lambda_{1}(x) & = & \sqrt{\left(\lambda_{2}(x)-\lambda_{1}(x)\right)^{2}}\\
 & = & \sqrt{\left(\lambda_{1}(x)+\lambda_{2}(x)\right)^{2}-4}~.
\end{eqnarray*}
To minimize this we must minimize $\trace\Phi_{\text{*}}\left[I\right](\Phi(x))$.
As a way of minimizing the aggregate anisotropy we shall seek to minimize\footnote{In a slight deviation from \cite{gries-vogel}, the domain of integration of the energy functional is $B_1\setminus B_\epsilon$, not the transformed domain $B_1\setminus B_{\frac12}$.}
\[
I_p(\Phi)=\int_{B_{1}\setminus B_{\epsilon}}\left(\trace\Phi_{\text{*}}\left[I\right]\right)^{p}(\Phi(x))~dx
\]
for a fixed choice of $1\le p<\infty$, and 
\[
I_\infty(\Phi)=\max_{x\in \overline{B_{1}}\setminus B_{\epsilon}}\trace\Phi_{\text{*}}\left[I\right](\Phi(x))= \max_{y\in \overline{B_{1}}\setminus B_{\frac12}}\trace\Phi_{\text{*}}\left[I \right](y)   ~,
\]
corresponding to $p=\infty$. Note that $\lambda$ is an eigenvalue for $\Phi_{\text{*}}\left[I \right](\Phi(x))$,
with eigenvector $v$, if and only if $\lambda$ is an eigenvalue
for 
\[
\frac{D\Phi^{T}D\Phi}{\left|\det D\Phi\right|}(x)~,
\]
with eigenvector $D\Phi^{T}(x)v$, and thus 
\[
\trace\Phi_{\text{*}}\left[I \right](\Phi(x))=\trace\left[\frac{D\Phi^{T}D\Phi}{\left|\det D\Phi\right|}\right](x)~.
\]

\begin{prop}
Let $\Phi$ be represented in terms of its polar decomposition 
\[
\Phi=\exp(\psi)\phi~,
\]
where the directional field $\phi$ is in $C^{1}(\overline{B_{1}}\setminus B_{\epsilon};\mathcal{S}^{1})$
and logarithmic amplitude $\psi$ is in $C^{1}(\overline{B_{1}}\setminus B_{\epsilon};\mathbb{R})$.
Then 
\[
\trace\left(D\Phi^{T}D\Phi\right)=\left|\Phi\right|^{2}\left(\left|D\phi\right|^{2}+\left|D\psi\right|^{2}\right)~.
\]
\end{prop}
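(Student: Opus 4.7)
The plan is a direct componentwise computation from the polar decomposition, with the one structural ingredient being the unit-length constraint $|\phi|\equiv 1$. First I would differentiate $\Phi_{i}=e^{\psi}\phi_{i}$ with the product rule to obtain
\[
D\Phi \;=\; e^{\psi}\bigl(\phi\otimes D\psi + D\phi\bigr),
\]
and then form $D\Phi^{T}D\Phi$. Expanding entrywise yields four kinds of contributions: a ``pure $\psi$'' term carrying the factor $|\phi|^{2}$, a ``pure $\phi$'' term $\sum_{i}\partial_{j}\phi_{i}\,\partial_{k}\phi_{i}$, and two symmetric mixed terms proportional to $\sum_{i}\phi_{i}\,\partial_{k}\phi_{i}$ (and its $j\leftrightarrow k$ counterpart).

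The crucial observation is that since $\phi$ takes values in $\mathcal{S}^{1}$, differentiating $|\phi|^{2}\equiv 1$ gives
\[
\sum_{i}\phi_{i}\,\partial_{k}\phi_{i}=0 \qquad\text{for every index }k,
\]
i.e.\ $\phi$ is pointwise orthogonal to each of its partial derivatives. This annihilates the two mixed terms, while $|\phi|^{2}=1$ simplifies the first, leaving
\[
(D\Phi^{T}D\Phi)_{jk} \;=\; e^{2\psi}\Bigl(\partial_{j}\psi\,\partial_{k}\psi + \sum_{i}\partial_{j}\phi_{i}\,\partial_{k}\phi_{i}\Bigr).
\]
Setting $j=k$ and summing, the first contribution is $|D\psi|^{2}$ and the second is $|D\phi|^{2}$; since $|\Phi|^{2}=e^{2\psi}$, the identity follows.

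There is really no obstacle here---the whole calculation is a handful of lines once the orthogonality $\phi\perp\partial_{k}\phi$ is in hand. The point worth emphasising is that the neat decoupling between the ``amplitude'' and ``directional'' contributions to $\trace(D\Phi^{T}D\Phi)$ is a specifically geometric consequence of $\phi$ being $\mathcal{S}^{1}$-valued: a generic factorisation $\Phi=a\,\phi$ without a pointwise normalisation of $\phi$ would retain the mixed terms, and the energy would not split as the sum of two Dirichlet-type pieces.
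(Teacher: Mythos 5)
Your argument is correct and is essentially the paper's own proof written out in index notation rather than matrix form: both compute $D\Phi$ via the product rule, observe that $\phi^{T}D\phi=0$ (equivalently $\sum_i\phi_i\partial_k\phi_i=0$) because $\phi$ is $\mathcal{S}^1$-valued, and conclude that the mixed terms in $D\Phi^{T}D\Phi$ vanish before taking the trace. There is no substantive difference in the route.
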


\begin{proof}
Differentiating we find 
\[
D\Phi=\exp(\psi)\phi D\psi^{T}+\exp(\psi)D\phi~.
\]
Since $\phi^{T}\phi=1,$ we have 
\[
\phi^{T}D\phi=0~,\text{ and }D\phi^{T}\phi=0~,
\]
and therefore 
\begin{align*}
D\Phi^{T}D\Phi & =\exp(2\psi)\left(D\psi\phi^{T}+D\phi^{T}\right)\left(\phi D\psi^{T}+D\phi\right)\\
 & =\left|\Phi\right|^{2}\left(D\phi^{T}D\phi+D\psi D\psi^{T}\right)~.
\end{align*}
By taking the trace we arrive at the desired conclusion. 
\end{proof}
It is well known that $\phi$, being in $C^{1}\left(\overline{B_{1}}\setminus B_{\epsilon};\mathcal{S}^{1}\right)$,
admits a canonical lift $\theta= \arg \left( \phi \right)\in C^{1}\left(\overline{B_{1}}\setminus B_{\epsilon};\mathbb{R}/2\pi\mathbb{Z}\right)$ \footnote{A function $\theta\,:\,\overline{B_{1}}\setminus B_{\epsilon} \rightarrow \mathbb{R}/2\pi\mathbb{Z}$ is an element of  $C^{1}\left(\overline{B_{1}}\setminus B_{\epsilon};\mathbb{R}/2\pi\mathbb{Z}\right)$ iff given any point $x\in \overline{B_{1}}\setminus B_{\epsilon}$ there exists an open neighborhood $\omega_x$ of $x$, relative to $\overline{B_{1}}\setminus B_{\epsilon}$, and a representative of $\theta$ (mod $2\pi$) that lies in $C^{1}(\omega_x;\mathbb{R})$. Notice that the globally defined derivative of $\theta \in C^1\left(\overline{B_{1}}\setminus B_{\epsilon};\mathbb{R}/2\pi\mathbb{Z}\right)$, $D\theta$, lies in $C^0\left(\overline{B_{1}}\setminus B_{\epsilon};\mathbb{R}^2\right)$.}
such that 
\[
\phi=\left(\cos\theta,\sin\theta\right)^{T}.
\]
We write 
\[
J=\left[\begin{array}{cc}
0 & -1\\
1 & 0
\end{array}\right],\quad\text{e}_{r}=\frac{x}{\left|x\right|}~,~~\text{ and }~~\text{e}_{\theta}=J\frac{x}{\left|x\right|}.
\]

\begin{prop}
The matrix $D\phi$ has rank one; furthermore 
\[
\Range(D\phi)=\Span(\phi)^{\perp}~,~\text{ and }~\Ker(D\phi)=\Span(D\theta)^{\perp}~.
\]
We denote by $\widehat{D\psi,D\theta}$ the angle defined by 
\begin{align*}
\cos\left(\widehat{D\psi,D\theta}\right) & =\frac{1}{\left|D\psi\right|\left|D\theta\right|}D\psi\cdot D\theta~,\text{ and }\\
\sin\left(\widehat{D\psi,D\theta}\right) & =\frac{1}{\left|D\psi\right|\left|D\theta\right|}\det\left(D\psi,D\theta\right)\cdot
\end{align*}
Then 
\[
\trace \Phi_{\text{*}}\left[I\right](\Phi(x))=\frac{1}{\left|\sin\left(\widehat{D\psi,D\theta}\right)\right|}\left(\frac{\left|D\theta\right|}{\left|D\psi\right|}+\frac{\left|D\psi\right|}{\left|D\theta\right|}\right)(x)\geq\left(\frac{\left|D\theta\right|}{\left|D\psi\right|}+\frac{\left|D\psi\right|}{\left|D\theta\right|}\right)(x)
\]
with equality only when $D\psi\cdot D\theta=0$. 
\end{prop}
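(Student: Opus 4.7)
The plan is to compute everything from the explicit representation $\phi=(\cos\theta,\sin\theta)^T$ and then combine with the trace formula from the previous proposition.

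\medskip

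\textbf{Step 1 (structure of $D\phi$).} Differentiating $\phi=(\cos\theta,\sin\theta)^T$ gives
\[
D\phi \;=\; (-\sin\theta,\cos\theta)^T\,D\theta^T \;=\; (J\phi)\, D\theta^T,
\]
which is an outer product and hence of rank at most one. Since $|J\phi|=|\phi|=1$, the range is spanned by $J\phi$, i.e., $\Range(D\phi)=\Span(\phi)^{\perp}$, and $D\phi\,v=(J\phi)(D\theta\cdot v)=0$ iff $D\theta\cdot v=0$, giving $\Ker(D\phi)=\Span(D\theta)^{\perp}$. (When $D\theta=0$ everywhere, $D\phi\equiv 0$ and the statement is vacuous.)

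\medskip

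\textbf{Step 2 (trace).} From Step 1, $|D\phi|^2=\trace(D\phi^T D\phi)=|J\phi|^2|D\theta|^2=|D\theta|^2$. Plugging into the formula from the preceding proposition yields
\[
\trace(D\Phi^T D\Phi)\;=\;|\Phi|^2\bigl(|D\theta|^2+|D\psi|^2\bigr).
\]

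\medskip

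\textbf{Step 3 (determinant).} Using $D\Phi=\exp(\psi)\bigl(\phi D\psi^T + (J\phi)D\theta^T\bigr)$, I factor
\[
D\Phi \;=\; \exp(\psi)\,[\phi\ \ J\phi]\begin{pmatrix} D\psi^T \\ D\theta^T \end{pmatrix}.
\]
The first matrix is a rotation (columns $\phi$ and $J\phi$ are orthonormal), so it has determinant $1$. The second matrix has determinant $\det(D\psi,D\theta)$, which by the definition of the angle $\widehat{D\psi,D\theta}$ equals $|D\psi||D\theta|\sin(\widehat{D\psi,D\theta})$. Hence
\[
|\det D\Phi| \;=\; |\Phi|^2\,|D\psi|\,|D\theta|\,\bigl|\sin(\widehat{D\psi,D\theta})\bigr|.
\]

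\medskip

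\textbf{Step 4 (conclusion).} Dividing the result of Step 2 by that of Step 3 and using $\trace \Phi_{*}[I](\Phi(x))=\trace(D\Phi^T D\Phi)/|\det D\Phi|$ from the discussion preceding the proposition, the $|\Phi|^2$ factors cancel and one obtains
\[
\trace \Phi_{*}[I](\Phi(x))\;=\;\frac{1}{|\sin(\widehat{D\psi,D\theta})|}\left(\frac{|D\theta|}{|D\psi|}+\frac{|D\psi|}{|D\theta|}\right).
\]
The inequality follows from $|\sin(\widehat{D\psi,D\theta})|\le 1$, with equality iff $|\sin(\widehat{D\psi,D\theta})|=1$, i.e., iff $D\psi\cdot D\theta=0$. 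The only mild subtlety is handling points where $D\psi=0$ or $D\theta=0$, where the angle is undefined but the inequality becomes either an equality ($D\psi=0=D\theta$ forces degeneracy of $D\Phi$, excluded by the diffeomorphism hypothesis) or a trivial statement (when exactly one vanishes, the left-hand side is infinite, consistent with $\sin$ being effectively zero in the formal expression, and the diffeomorphism hypothesis again rules this out); thus the formula holds wherever $\Phi$ is a genuine diffeomorphism, which is our standing assumption. No real obstacle is anticipated; the entire argument is a structured calculation.
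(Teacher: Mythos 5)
Your proof is correct and follows essentially the same route as the paper's: compute $D\phi=(J\phi)D\theta^T$ to get the rank/range/kernel claims and $|D\phi|=|D\theta|$, factor $D\Phi$ to obtain $\det D\Phi=\det(D\psi,D\theta)\,|\Phi|^2$, and divide. The only cosmetic difference is that you present the determinant step via the explicit factorization $D\Phi=\exp(\psi)[\phi\ \ J\phi]\bigl(\begin{smallmatrix}D\psi^T\\ D\theta^T\end{smallmatrix}\bigr)$, whereas the paper states the identity $\det(\phi D\psi^T+(J\phi)D\theta^T)=\det(D\psi,D\theta)$ directly; both lead to the same conclusion, and both invoke $\det D\Phi\ne 0$ (the diffeomorphism hypothesis) to rule out vanishing of $D\psi$ or $D\theta$.
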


\begin{proof}
We calculate 
\[
D\phi=\left(J\phi\right)D\theta^{T},
\]
which immediately leads to the statements about $\text{Range}(D\phi)$
and $\Ker(D\phi)$, and which also gives 
\[
D\theta=\left(D\phi\right)^{T}\left(J\phi\right)~.
\]
As a consequence 
\begin{align*}
\det D\Phi & =\det\left(\phi D\psi^{T}+\left(J\phi\right)D\theta^{T}\right)\exp\left(2\psi\right)\\
 & =\det\left(D\psi,D\theta\right)\left|\Phi\right|^{2}\\
 & =\left|D\theta\right|\left|D\psi\right|\sin\left(\widehat{D\psi,D\theta}\right)\left|\Phi\right|^{2}.
\end{align*}
Here we have used that $\det D\Phi\ne0$, since $\Phi$ is a bijective
diffeomorphism of $\overline{B_{1}}\setminus B_{\epsilon}$ onto $\overline{B_{1}}\setminus B_{\frac{1}{2}}$;
consequently $\det\left(D\psi,D\theta\right)\ne0$ and $\left|D\psi\right|\left|D\theta\right|>0$
and $\sin\left(\widehat{D\psi,D\theta}\right)$ (and $\widehat{D\psi,D\theta}$)
is well-defined. It now follows that 
\begin{align*}
\trace\Phi_{\text{*}}\left[I \right](\Phi(x)) & =\frac{\left(\left|D\theta\right|^{2}+\left|D\psi\right|^{2}\right)}{\left|D\theta\right|\left|D\psi\right|\left|\sin\left(\widehat{D\psi,D\theta}\right)\right|}(x)\\
 & =\frac{1}{\left|\sin\left(\widehat{D\psi,D\theta}\right)\right|}\left(\frac{\left|D\theta\right|}{\left|D\psi\right|}+\frac{\left|D\psi\right|}{\left|D\theta\right|}\right)(x)\geq\frac{\left|D\theta\right|}{\left|D\psi\right|}(x)+\frac{\left|D\psi\right|}{\left|D\theta\right|}(x)~,
\end{align*}
with equality if and only of $D\psi$ is normal to $D\theta$, and
therefore in the kernel of $D\phi$. 
\end{proof}

\section{The radial transformation case}

\noindent
For the general case of a radial transformation $\phi=\frac{x}{\left|x\right|}$, and $\psi=f\left(\left|x\right|\right).$
Then $D\theta=\frac{1}{\left|x\right|}J\frac{x}{\left|x\right|}$
and $D\psi=f^{\prime}\left(\left|x\right|\right)\frac{x}{\left|x\right|}.$
The transformation 
\[
\Phi=\exp(\psi)\phi
\]
is a bijective $C^{1}$ diffeomorphism of $\overline{B_{1}}\setminus B_{\epsilon}$
onto $\overline{B_{1}}\setminus B_{\frac{1}{2}}$ with
\[
\left.\Phi\right|_{C_{1}}=Id~~,\text{ and }\Phi\left(C_{\epsilon}\right)=C_{\frac{1}{2}}~,
\]
if and only if 
\[
f(\epsilon)=-\log2~,f(1)=0~,\text{ and }f\in C^{1}([\epsilon,1])\text{ with }f'(r)>0\text{ for all }r\in[\epsilon,1].
\]
In this case, $\sin\left(\widehat{D\psi,D\theta}\right)=1$, and 
\[
\trace\Phi_{\text{*}}\left[I \right](\Phi(x))=\frac{1}{\left|x\right|f^{\prime}\left(\left|x\right|\right)}+\left|x\right|f^{\prime}\left(\left|x\right|\right)~.
\]

\begin{prop}
\label{1dE-L}
Suppose $1\leq p<\infty$, and let $I_{p}$ denote the energy 
\[
I_{p}\left(f\right):=\int_{B_{1}\setminus B_{\epsilon}}\left(\trace\Phi_{\text{*}}\left[I \right]\right)^{p}(\Phi(x))\text{d}x=2\pi\int_{\epsilon}^{1}\left(\frac{1}{rf^{\prime}\left(r\right)}+rf^{\prime}\left(r\right)\right)^{p}rd\text{r}~,
\]
with values in $(0,\infty)$, defined on the convex set 
\[
\mathcal{C}=\biggl\{ f\in C^{1}\left([\epsilon,1]\right)\text{ : }f^{\prime}>0~~,\,f\left(\epsilon\right)=-\log2,f\left(1\right)=0\biggr\}~.
\]
Then 
\begin{itemize}
\item $I_{p}$ has a unique minimizer, $f_{p}$, in $\mathcal{C}$. 
\item $f_{p}$ lies in $C^{\infty}([\epsilon,1])$, and is the unique solution in $\mathcal{C}$
to the Euler--Lagrange equation 
\begin{align*}
\left(\left(\frac{1}{rf_{p}^{\prime}\left(r\right)}+rf_{p}^{\prime}\left(r\right)\right)^{p-1}\left(-\frac{1}{\left(f_{p}^{\prime}\right)^{2}}+r^{2}\right)\right)^{\prime} & =0\text{ in }\left[\epsilon,1\right]~.\hspace{20pt}\text{(E-L)}
\end{align*}
\end{itemize}
\end{prop}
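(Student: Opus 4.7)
The plan is to reduce the problem to a one-dimensional variational problem in $v(r) := f'(r)$, extract uniqueness and existence from strict convexity and the direct method respectively, and obtain both the Euler--Lagrange equation and $C^\infty$ regularity via the implicit function theorem. Writing $L_p(r,v) := r\bigl(1/(rv)+rv\bigr)^p$ one has $I_p(f) = 2\pi\int_\epsilon^1 L_p(r,f'(r))\,dr$; because $v\mapsto 1/(rv)+rv$ is smooth, strictly convex on $v>0$, and bounded below by $2$, while $t\mapsto t^p$ is convex and strictly increasing on $[2,\infty)$, the Lagrangian $L_p(r,\cdot)$ is strictly convex on $v>0$ for each fixed $r$. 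Since any $f\in\mathcal{C}$ is determined by $f'$ together with the fixed value $f(\epsilon)=-\log 2$, this gives strict convexity of $I_p$ on the affine set $\mathcal{C}$, and in particular uniqueness of the minimizer (should one exist) and of any $\mathcal{C}$-solution of (E-L).

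For existence I would use the direct method. The two-sided coercive bound $L_p(r,v) \geq c\bigl(v^p+v^{-p}\bigr)$ with $c=c(\epsilon,p)>0$ forces any minimizing sequence $f_n$ to satisfy uniform $L^p([\epsilon,1])$ bounds on both $f_n'$ and $1/f_n'$. For $p>1$ one extracts $f_n' \rightharpoonup v^\infty$ weakly in $L^p$; the integral constraint $\int_\epsilon^1 v\,dr=\log 2$ and the sign constraint pass to the weak limit, and weak lower semicontinuity of $f\mapsto I_p(f)$ (from convexity of $L_p(r,\cdot)$) shows $v^\infty$ realizes the infimum. For $p=1$, the sharp bound on $\int_\epsilon^1 1/f_n'\,dr$ inherited from $I_1(f_n)$ supplies the equi-integrability needed to replace reflexivity, via a standard Dunford--Pettis argument.

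To obtain (E-L) I test $v^\infty$ against $\varphi\in C_c^\infty(\epsilon,1)$, which preserves all boundary values, so that the first variation reads $\int_\epsilon^1 \partial_v L_p(r,v^\infty)\,\varphi'\,dr = 0$; equivalently $r\mapsto \partial_v L_p(r,v^\infty(r))$ is distributionally constant. A direct computation gives $\partial_v L_p = p\bigl(1/(rv)+rv\bigr)^{p-1}(r^2-1/v^2)$, recovering (E-L). Strict convexity of $L_p$ in $v$ makes $\partial_v L_p(r,\cdot)$ strictly monotone, so the algebraic equation $\partial_v L_p(r,v)=c$ defines $v$ as a continuous, positive, bounded function of $r\in[\epsilon,1]$; iterated application of the implicit function theorem then yields $v^\infty \in C^\infty([\epsilon,1])$, whence $f_p\in C^\infty([\epsilon,1])\cap\mathcal{C}$. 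The main technical obstacle, as I see it, is ensuring that the weak-limit minimizer $v^\infty$ is actually pointwise bounded and bounded away from zero on $[\epsilon,1]$, so that (E-L) holds classically and the bootstrap can be started; this is precisely what the two-sided coercivity of $L_p$ buys us, and it is also what carries the argument through in the non-reflexive case $p=1$.
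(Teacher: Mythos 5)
Your approach is genuinely different from the paper's. The paper does \emph{not} use the direct method at all: it first solves the Euler--Lagrange equation explicitly, by integrating once to get $G(rf_p'(r))=C/r^2$ with $G(t)=(1/t+t)^{p-1}(-1/t^2+1)$ strictly increasing from $-\infty$ to $+\infty$, so that $f_p(r)=\int_\epsilon^r t^{-1}G^{-1}(C/t^2)\,dt-\log 2$, and then tunes the single constant $C$ to meet $f_p(1)=0$ via a monotonicity-in-$C$ argument. This at once yields existence, uniqueness of the $\mathcal{C}$-solution of (E-L), and $C^\infty$ regularity for free. Only afterwards is convexity used, and only for the ``soft'' half: if some $g\in\mathcal{C}$ had $I_p(g)<I_p(f_p)$, convexity would force the first variation of $I_p$ at $f_p$ in direction $g-f_p$ to be negative, contradicting (E-L). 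Your proposal inverts this order (direct method $\to$ E-L $\to$ regularity), which is a heavier machine for a problem that admits a closed-form integration.

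As written, your route has concrete gaps. First, the claim that two-sided coercivity $L_p(r,v)\geq c(v^p+v^{-p})$ yields that the weak minimizer $v^\infty$ is pointwise bounded and bounded away from $0$ is not correct: those bounds only give $v^\infty,\,1/v^\infty\in L^p$, not $L^\infty$, and there is no a priori bound on $(v^\infty)'$ to upgrade them. The pointwise bounds would have to come \emph{from} the integrated form of (E-L), not precede it; but to derive even the distributional E-L you need $\partial_v L_p(r,v^\infty)\in L^1_{\mathrm{loc}}$, and $|\partial_v L_p|\sim v^{-(p+1)}$ as $v\to 0^+$, which is not controlled by $1/v^\infty\in L^p$, so the bootstrap cannot be cleanly started. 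Second, for $p=1$ the equi-integrability claim fails: boundedness of $\int_\epsilon^1 v_n\,dr$ (it is fixed $=\log 2$) together with boundedness of $\int_\epsilon^1 1/v_n\,dr$ does \emph{not} imply equi-integrability of $\{v_n\}$ in $L^1$; one can take $v_n=n$ on a set of measure $\alpha/n$ (carrying mass $\alpha<\log 2$) and a constant elsewhere, which keeps both integrals bounded while $\int_{\{v_n>M\}}v_n=\alpha$ for all $n>M$. Moreover $L_1(r,v)\sim r^2 v$ at infinity is merely linear, so de la Vall\'ee Poussin gives no help, and the natural relaxation is BV rather than $L^1$. In short, your plan would need substantial repair precisely at the points you flagged as ``the main technical obstacle,'' whereas the paper sidesteps all of these issues by constructing the smooth solution of (E-L) explicitly and then using convexity only to certify optimality.
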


\begin{proof}
We start by establishing (part of) the last statement concerning the existence of a unique solution to the Euler--Lagrange equation (E-L). By integration, any $C^1$ solution to (E-L) must satisfy 
\[
G(rf_{p}'(r))=\frac{C}{r^{2}}
\]
for some constant $C$, with the function $G:\mathbb{R}_{+}\rightarrow\mathbb{R}$
given by 
\[
G(t)=\left(\frac{1}{t}+t\right)^{p-1}\left(-\frac{1}{t^{2}}+1\right)~.
\]
Now suppose $1<p<\infty$. A simple calculation shows that $G$ is monotonically increasing,
with $G(1)=0$, $\lim_{t\rightarrow0_{+}}G(t)=-\infty$ and $\lim_{t\rightarrow\infty}G(t)=\infty$.
$G^{-1}:~\mathbb{R}\rightarrow\mathbb{R}_{+}$ is thus well defined,
and $f_{p}$ has the form 
\[
f_{p}(r)=\int_{\epsilon}^{r}f_{p}'(t)dt-\log2=\int_{\epsilon}^{r}t^{-1}G^{-1}\left(\frac{C}{t^{2}}\right)~dt-\log2~,
\]
for some constant $C$. The constant $C$ must be chosen so that $f_{p}$
satisfies the boundary condition $f_{p}(1)=0$. As $C\rightarrow\int_{\epsilon}^{1}t^{-1}G^{-1}\left(\frac{C}{t^{2}}\right)~dt-\log2$ is continuous and monotonically increasing, with 
\[
\int_{\epsilon}^{1}t^{-1}G^{-1}\left(\frac{C}{t^{2}}\right)~dt-\log2\rightarrow\begin{cases}
|\log\epsilon|-\log2>0 & \text{ when }C\rightarrow 0\\
-\log2<0 & \text{ when }C\rightarrow-\infty
\end{cases}~,
\]
it follows immediately that there exists a unique value $C_{0}<0$ for which the boundary condition $f_{p}(1)=0$
is satisfied. This shows the uniqueness of the solution to the Euler--Lagrange equation in $\mathcal{C}$.
Furthermore, the formula 
\[
f_{p}(r)=\int_{\epsilon}^{r}t^{-1}G^{-1}\left(\frac{C_{0}}{t^{2}}\right)~dt-\log2
\]
clearly gives rise to a $C^\infty$ function in $\mathcal{C}$ which solves the equation (E-L), thus establishing the existence. A slightly modified argument works for $p=1$, and in that case we find the (even more) explicit formula
$$
f_{1}:r\to\log\left(\frac{3r+\sqrt{9r^{2}+16\left(2-\epsilon\right)\left(\frac{1}{2}-\epsilon\right)}}{4\left(2-\epsilon\right)}\right)~.
$$

\noindent
We now proceed to show that $f_p$ is the unique minimizer of $I_p$ in $\mathcal{C}$. Since the function $(0,\infty)\ni x\rightarrow\left(\frac{1}{x}+x\right)^{p}\in(0,\infty)$ is strictly convex, it follows immediately that $I_{p}$ is strictly convex on $\mathcal{C}$. Now suppose there existed a function $g \in \mathcal{C}$ with $I_p(g)<I_p(f_p)$. The convexity of the functional $I_p$ implies that
$$
\frac{d}{d\tau}|_{\tau=0} I_p(f_p+\tau(g-f_p))\le I_p(g)-I_p(f_p)<0~,
$$
or
$$
\int_{\epsilon}^{1}\left(\frac{1}{rf^{\prime}\left(r\right)}+rf^{\prime}\left(r\right)\right)^{p-1}\left(-\frac{1}{\left(f_{p}^{\prime}\right)^{2}}+r^{2}\right)(g-f_p)^{\prime}d\text{r} <0~,
$$
in contradiction with the fact that $f_p$ is a solution to the Euler-Lagrange equation (E-L). This verifies that $f_p$ is a minimizer of $I_p$ in $\mathcal{C}$. The fact that the minimizer is unique follows immediately from the strict convexity of $I_p$.
\end{proof}

\noindent
\begin{proof}[\bf Remark]

\noindent
The logarithmic amplitude $f_{1}$ gives rise to the transformation 
\[
\Phi_{1}=\left(\frac{3\left|x\right|+\sqrt{9\left|x\right|^{2}+16\left(2-\epsilon\right)\left(\frac{1}{2}-\epsilon\right)}}{4\left(2-\epsilon\right)}\right)\text{\ensuremath{\frac{x}{\left|x\right|}}}~.
\]
We compute 
\[
I_{1}(f_{1})=2\pi\int_{\epsilon}^{1}\left(\frac{1}{f_{1}^{\prime}\left(r\right)}+r^{2}f_{1}^{\prime}\left(r\right)\right)dr=2\pi\left(1-\epsilon^{2}+\frac{2}{3}\left(2\epsilon-1\right)^{2}\right)~.
\]
By comparison, the radial affine transformation 
\[
\Phi_{ra}=\left(\frac{\left|x\right|-1}{2(1-\epsilon)}+1\right)\text{\ensuremath{\frac{x}{\left|x\right|}}},
\]
with logarithmic amplitude 
\[
f_{ra}(r)=\log\left(\frac{r-1}{2(1-\epsilon)}+1\right)~.
\]
has
\[
I_{1}\left(f_{ra}\right)=2\pi\int_{\epsilon}^{1}\left(\frac{1}{f_{ra}^{\prime}\left(r\right)}+r^{2}f_{ra}^{\prime}\left(r\right)\right)dr=2\pi\left(1-\epsilon^{2}+\ln2\left(2\epsilon-1\right)^{2}\right)\geq I_{1}(f_1)~.
\]
Equality occurs only when $\epsilon=\frac{1}{2}$ (when the associated
transformations are both the identity).
\end{proof}
\vskip 15pt

\noindent
Turning to maximum norm, we consider the minimization 
\[
\mathcal{I}_{\infty}=\inf_{f\in \mathcal{C}}\sup_{\left[\epsilon,1\right]}\left(\frac{1}{rf^{\prime}\left(r\right)}+rf^{\prime}\left(r\right)\right)~.
\]
We note that 
\begin{align*}
\mathcal{I}_{\infty} & =\inf_{K>1}\left\{ \frac{1}{K}+K~:~\exists f\in \mathcal{C}\text{ with }\sup_{r\in\left[\epsilon,1\right]}\left\{ \frac{1}{rf^{\prime}\left(r\right)}+rf^{\prime}\left(r\right)\right\} \leq\frac{1}{K}+K~\right\} \\
 & \ge\inf_{K>1}\left\{ \frac{1}{K}+K~:~\exists f\in \mathcal{C}\text{ with }\frac{1}{K}\left|\log r\right|\leq\left|f\left(r\right)\right|\right\} \\
 & \ge\inf \left\{ \frac{1}{K}+K~:~\frac{\left|\log\epsilon\right|}{\log2}\leq K\right\} =\frac{\log2}{\left|\log\epsilon\right|}+\frac{\left|\log\epsilon\right|}{\log2}~.
\end{align*}
Here we have used that, if $f\in \mathcal{C}$ and if $K>1$, then
\begin{eqnarray*}
\frac{1}{rf^{\prime}\left(r\right)}+rf^{\prime}\left(r\right)\le\frac{1}{K}+K \text{ in }(\epsilon,1) &\implies &\frac{1}{Kr}\le f^{\prime}\left(r\right)\le\frac{K}{r} \text{ in }(\epsilon,1)\\
 &\implies &\frac{1}{K}\left|\log r\right|\le\left|f(r)\right|\le K\left|\log r\right| \text{ in }(\epsilon,1)~.
\end{eqnarray*}
On the other hand, the function 
\begin{equation}
f_{\infty}(r)=\frac{\log2}{|\log\epsilon|}\log r\label{eq:def-finty}
\end{equation}
lies in $\mathcal{C}$, and has $I_{\infty}(f_{\infty})=\frac{\log2}{|\log\epsilon|}+\frac{|\log\epsilon|}{\log2}$.
It now follows immediately that $f_{\infty}$ is a minimizer of $I_{\infty}$
in $\mathcal{C}$. The following graph shows the logarithmic amplitudes  $f_{ra}$ (dashed
orange line), $f_{1}$, $f_{2}$, $f_{3}$, $f_{5}$, $f_{8},f_{13}$
and $f_{\infty}$ (solid lines from red to blue), for $\epsilon=1/100$.
\begin{center}
\includegraphics[width=0.5\columnwidth]{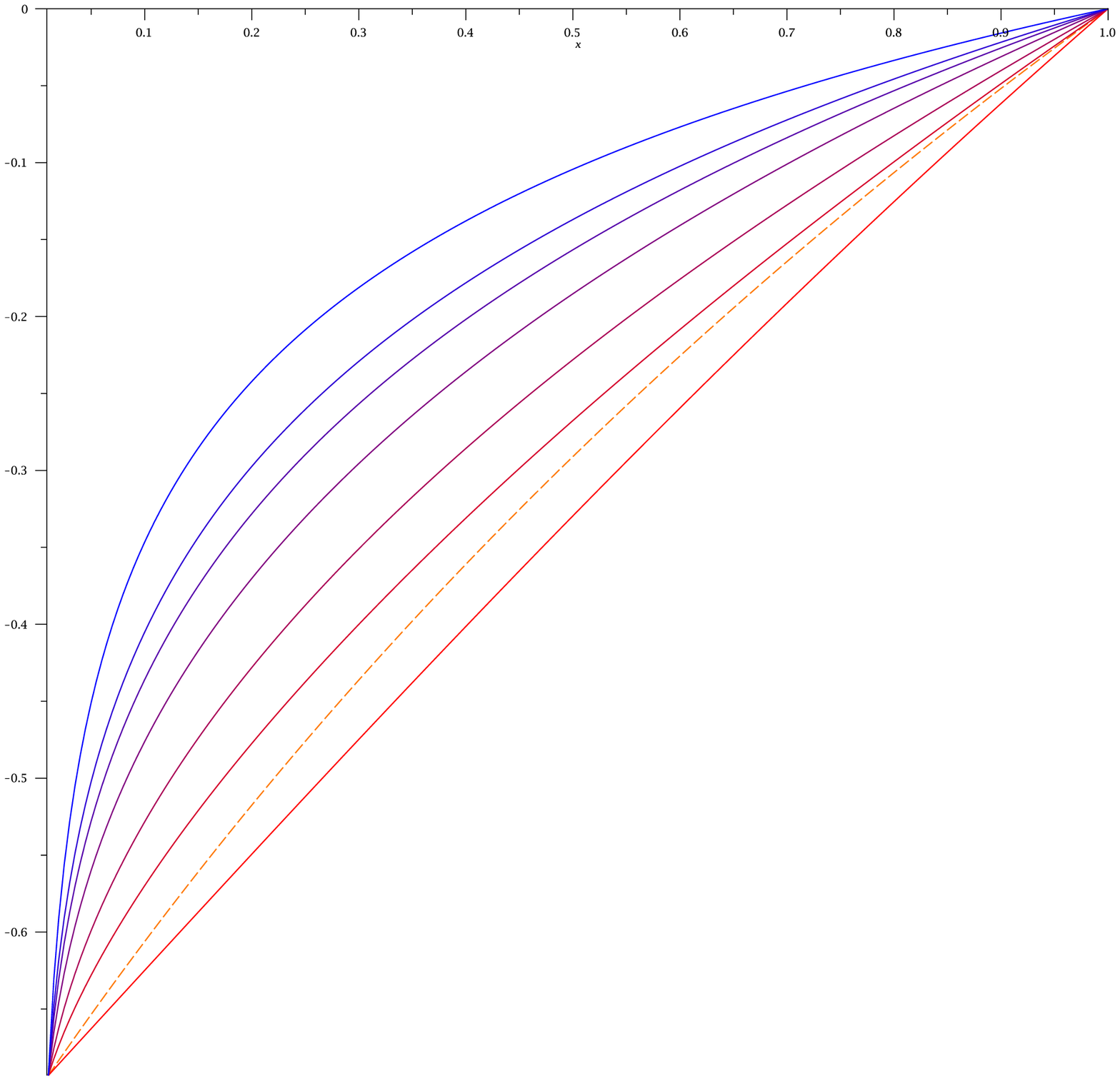}
\par\end{center}

\section{Optimality of radial transforms}

We now return to the general, two dimensional case. By introducing
$u=\psi$ and $V=-JD\theta$ in the formula 
\[
\trace\Phi_{\text{*}}\left[I \right](\Phi(x))=\frac{\left|D\psi\right|^{2}+\left|D\theta\right|^{2}}{\det\left(D\psi,D\theta\right)}(x)~,
\]
we obtain 
\[
\trace\Phi_{\text{*}}\left[I \right](\Phi(x))=\frac{\left|Du\right|^{2}+\left|V\right|^{2}}{Du\cdot V}(x)~.
\]
Similarly, by introducing $u=\theta$ and $V=JD\psi$, we obtain 
\[
\trace\Phi_{\text{*}}\left[I \right](\Phi(x))=\frac{\left|Du\right|^{2}+\left|V\right|^{2}}{Du\cdot V}(x)~.
\]
We thus notice that the problem of minimizing 
\[
I_{p}(\Phi)=\int_{\dom}\left(\trace\Phi_{\text{*}}\left[I \right]\right)^{p}(\Phi(x))\text{d}x
\]
with respect to $\psi$ given $\theta$, and with respect to $\theta$,
given $\psi$ merely differs by a change of the convex test set for $u$ (essentially relating to boundary conditions). Let $\arg\in C^{\infty}(\overline{B_{1}}\setminus B_{\epsilon};\mathbb{R}/2\pi\mathbb{Z})$\footnote{The space $C^{\infty}(\overline{B_{1}}\setminus B_{\epsilon};\mathbb{R}/2\pi\mathbb{Z})$ is defined as 
$\{\, u \in C^{1}(\overline{B_{1}}\setminus B_{\epsilon};\mathbb{R}/2\pi\mathbb{Z})~:~ Du \in C^{\infty}(\overline{B_{1}}\setminus B_{\epsilon};\mathbb{R}^2)\,\}$. Similarly $C^{2,\alpha}(\overline{B_{1}}\setminus B_{\epsilon};\mathbb{R}/2\pi\mathbb{Z})=\{\, u \in C^{1}(\overline{B_{1}}\setminus B_{\epsilon};\mathbb{R}/2\pi\mathbb{Z})~:~ Du \in C^{1,\alpha}(\overline{B_{1}}\setminus B_{\epsilon};\mathbb{R}^2)\,\}$} denote the standard argument function. We introduce the convex sets
\begin{eqnarray*}
\mathcal{C}_{\theta}&=&C^{2,\alpha}(\overline{B_1}\setminus B_\epsilon;\mathbb{R}/2\pi\mathbb{Z})\cap\{~u|_{C_{1}}=\arg~\}\text{ and }\\
\mathcal{C}_{\psi}&=&C^{2,\alpha}(\overline{B_1}\setminus B_\epsilon;\mathbb{R})\cap\{~u|_{C_{\epsilon}}=-\log 2~,~u|_{C_{1}}=0~\}~,
\end{eqnarray*}
for some fixed $\alpha>0$.

\begin{prop}
\label{prop:ConvexFS}Given $\mathcal{C}=\mathcal{C}_{\psi}$ and a fixed $V\in C^0\left(\overline{B_1}\setminus B_\epsilon;\mathbb{R}^2\right)$,
or $\mathcal{C}=\mathcal{C}_{\theta}$ and a fixed $V\in C^0\left(\overline{B_1}\setminus B_\epsilon;\mathbb{R}^2\right)$,
and given $n\geq1$, we introduce 
\[
\mathcal{C}_{n}=\left\{ u\in \mathcal{C}:Du\cdot V\geq\frac{1}{n}\text{ and }\left\Vert u\right\Vert _{C^{2,\alpha}\left(\overline{B_1}\setminus B_\epsilon\right)}\leq n\right\} .
\]
Suppose $\mathcal{C}_{N_0}\neq\emptyset,$ for some $N_0\geq 1$. Given any $1\le p<\infty$, the functional $F_p:\mathcal{C}_{n}\to\mathbb{R}$, $n\ge N_0$,  defined by 
\[
u\to F_p(u)=\int_{B_{1}\setminus B_{\epsilon}}\left(\frac{\left|Du\right|^{2}+\left|V\right|^{2}}{Du\cdot V}\right)^{p}\text{d}x
\]
is strictly convex, continuous, and attains its infimum on $\mathcal{C}_n$ at a unique minimizer. If the unique
minimizer, $u$, lies in $\text{int}(\mathcal{C}_{n})$\footnote{The interior is formed relative to  $\mathcal{C}_\psi$ or $\mathcal{C}_\theta$ with the $C^{2,\alpha}$ topology, respectively. }, then it
satisfies the  associated Euler-Lagrange equation
\begin{equation}
\text{div}\left(\left(\frac{\left|Du\right|^{2}+\left|V\right|^{2}}{Du\cdot V}\right)^{p-1}\left(\frac{2Du}{Du \cdot V}-\frac{\left|Du\right|^{2}+\left|V\right|^{2}}{\left(Du\cdot V\right)^{2}}V\right)\right)=0\text{ in }\dom~,\label{eq:Euler-Lagrange-FP}
\end{equation}
and in the case $\mathcal{C}= \mathcal{C}_{\theta}$, the additional  boundary condition
\begin{equation}
\label{eq:Euler-Lagrange-BC}
\left(\frac{2Du}{Du\cdot V}-\frac{\left|Du\right|^{2}+\left|V\right|^{2}}{\left(Du\cdot V\right)^{2}}V\right)\cdot \frac{x}{|x|}=0 \text{ on } C_\epsilon~.
\end{equation}
Conversely, if there exists a solution to \eqref{Euler-Lagrange-FP} (and \eqref{Euler-Lagrange-BC} in case $\mathcal{C}=\mathcal{C}_\theta$) which lies in $\mathcal{C} \cap \{ Du \cdot V >0 \text{ on } \overline{B_1}\setminus B_\epsilon ~\}$,
then, for some $N\ge 1$, this is the unique minimizer of $F_p$ in $\mathcal{C}_n$, for any $n \ge N$. Consequently this $u$ is also the unique minimizer of $F_p$ in $\mathcal{C} \cap \{ Du \cdot V >0 \text{ on } \overline{B_1}\setminus B_\epsilon ~\}$.
\end{prop}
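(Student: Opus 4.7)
The plan is to combine standard direct-method arguments with a pointwise strict-convexity analysis of the integrand $g(w):=(|w|^{2}+|V|^{2})/(w\cdot V)$ on the half-space $\{w\cdot V>0\}$, treating the two cases $\mathcal{C}=\mathcal{C}_{\psi}$ and $\mathcal{C}=\mathcal{C}_{\theta}$ in parallel, the only difference between them being which part of the boundary admits variations.

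For strict convexity of the integrand I would rotate coordinates so that $V=|V|\,\mathrm{e}_{1}$ and write $w=(s,t)$; then $g(w)=s/|V|+|V|/s+t^{2}/(s|V|)$ on $s>0$, and a direct computation gives Hessian determinant $4/s^{4}>0$ with $g_{tt}>0$, so $g$ is strictly convex on $\{w\cdot V>0\}$. This already handles $p=1$; for $p>1$, since $g\ge 2$ everywhere (by $|w|^{2}+|V|^{2}\ge 2|w||V|\ge 2\,w\cdot V$), the map $x\mapsto x^{p}$ is strictly convex and strictly increasing on $[2,\infty)$, and composition preserves strict convexity. Pointwise strict convexity in $Du$ transfers to strict convexity of $F_{p}$ on $\mathcal{C}_{n}$, and continuity is immediate because the integrand is a smooth function of $Du$ on the open set $\{Du\cdot V\ge 1/n\}$.

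For existence I would use the direct method: $\mathcal{C}_{n}$ is bounded in $C^{2,\alpha}$, and the compact embedding $C^{2,\alpha}\hookrightarrow C^{2,\beta}$ for $\beta<\alpha$ (Arzel\`a--Ascoli) makes it precompact in $C^{2,\beta}$. The constraints $\|u\|_{C^{2,\alpha}}\le n$, $Du\cdot V\ge 1/n$, and the linear boundary conditions all pass to $C^{2,\beta}$ limits, so $\mathcal{C}_{n}$ is compact in $C^{2,\beta}$, $F_{p}$ attains its infimum there, and strict convexity yields uniqueness. For the Euler--Lagrange characterization, if the minimizer $u$ lies in $\mathrm{int}(\mathcal{C}_{n})$ then $u+\tau v\in\mathcal{C}_{n}$ for all small $\tau$ whenever $v\in C^{2,\alpha}(\overline{\dom})$ satisfies $v|_{C_{1}}=0$, and additionally $v|_{C_{\epsilon}}=0$ in the $\mathcal{C}_{\psi}$ case. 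Differentiating at $\tau=0$ gives
\[
0=\int_{\dom}p\left(\frac{|Du|^{2}+|V|^{2}}{Du\cdot V}\right)^{p-1}Dv\cdot\left(\frac{2Du}{Du\cdot V}-\frac{|Du|^{2}+|V|^{2}}{(Du\cdot V)^{2}}V\right)dx,
\]
and integration by parts against interior test functions produces \eqref{Euler-Lagrange-FP}; in the $\mathcal{C}_{\theta}$ setting, allowing $v$ to be nonzero on $C_{\epsilon}$ yields the natural boundary condition \eqref{Euler-Lagrange-BC}, since the outward unit normal to $\dom$ on $C_{\epsilon}$ is a nonzero scalar multiple of $x/|x|$.

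For the converse, let $u\in\mathcal{C}\cap\{Du\cdot V>0\text{ on }\overline{\dom}\}$ solve the Euler--Lagrange system. Compactness of $\overline{\dom}$ gives a uniform lower bound $Du\cdot V\ge m>0$, and choosing $N\ge\max(\|u\|_{C^{2,\alpha}},1/m)$ places $u$ in $\mathrm{int}(\mathcal{C}_{n})$ for every $n\ge N$. Reversing the first-variation computation (using \eqref{Euler-Lagrange-BC} in the $\mathcal{C}_{\theta}$ case to cancel the $C_{\epsilon}$ boundary term) shows that $u$ is a critical point of the strictly convex functional $F_{p}$ on the convex set $\mathcal{C}_{n}$, hence its unique minimizer there. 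Any other competitor $\tilde{u}\in\mathcal{C}\cap\{D\tilde{u}\cdot V>0\}$ enters some $\mathcal{C}_{n}$ after further enlarging $n$, so $F_{p}(u)\le F_{p}(\tilde{u})$ with equality only at $\tilde{u}=u$. The main technical obstacle is the strict-convexity step: because the integrand is only convex on the half-space $\{w\cdot V>0\}$, no off-the-shelf convexity result applies directly, and one must do the Hessian calculation in rotated coordinates before bootstrapping to $p>1$.
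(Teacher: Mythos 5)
Your proposal is correct and follows essentially the same route as the paper: the rotated-coordinates Hessian computation for $g(s,t)=s/|V|+|V|/s+t^{2}/(s|V|)$ (with $\det D^{2}g=4/s^{4}$) is exactly the paper's $P_{V},P_{V^{\perp}}$ decomposition together with Lemma~\ref{lem:convexCp}, the bootstrap through $z\mapsto z^{p}$ replaces the paper's direct Hessian bound for $G_{p}[A]$, the $C^{2,\beta}$ compactness of $\mathcal{C}_{n}$ plays the same role as the paper's $C^{1}$-compactness plus dominated convergence, and the first-variation and converse arguments are identical. The only substantive difference is that the paper extracts a uniform strong-convexity constant $K$ on $\mathcal{C}_{n}$ (via the second assertion of Lemma~\ref{lem:convexCp}), whereas you obtain only strict convexity; strict convexity is all that is needed for the uniqueness claims in the proposition, so this is not a gap.
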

 
For the proof of Proposition \ref{prop:ConvexFS} we shall need the following lemma. 
\begin{lem}
\label{lem:convexCp}For any $1\le p <\infty$, and any $A>0$, the function
$G_{p}\left[A\right]:\left(0,\infty\right)\times\mathbb{R}\to\mathbb{R}_{+}$,
given by 
\[
\left(x,y\right)\to\left(\frac{A}{x}+\frac{x}{A}+\frac{x}{A}\left(\frac{y}{x}\right)^{2}\right)^{p}
\]
is convex. Furthermore, 
\[
G_{p}\left[A\right](x,y)-\frac{2A^{4}}{\left(A^{2}+M^{2}\right)^{3}}\left(x^{2}+y^{2}\right)
\]
is convex on $B_{M}=\{(x,y)~:~x^{2}+y^{2}<M^{2}~\}$. 
\end{lem}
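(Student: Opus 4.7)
The plan is to observe that $\frac{A}{x} + \frac{x}{A} + \frac{x}{A}(y/x)^2 = \frac{A^2+x^2+y^2}{Ax} =: h(x,y)$, so that $G_p[A] = h^p$. A direct computation gives
\[
D^2 h = \frac{2}{Ax^3}\begin{pmatrix} A^2+y^2 & -xy \\ -xy & x^2 \end{pmatrix},
\]
whose trace $2(A^2+x^2+y^2)/(Ax^3)$ and determinant $4/x^4$ are both strictly positive on $(0,\infty)\times\mathbb{R}$, so $h$ is strictly convex. Since $h \ge A/x + x/A \ge 2$ by AM-GM, composition with the convex non-decreasing map $t \mapsto t^p$ on $[2,\infty)$ then yields convexity of $G_p$ for every $p \ge 1$.

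For the strong convexity statement I will reduce everything to an eigenvalue estimate on $D^2 h$. Differentiating $G_p = h^p$ twice gives $D^2 G_p = p h^{p-1} D^2 h + p(p-1) h^{p-2}\, Dh\, Dh^T$, and the rank-one term is positive semidefinite for $p \ge 1$. Combining this with $p h^{p-1} \ge p\cdot 2^{p-1} \ge 1$ (again from $h \ge 2$) gives $D^2 G_p \ge D^2 h$ in the Loewner order. Since the Hessian of $\frac{2A^4}{(A^2+M^2)^3}(x^2+y^2)$ equals $\frac{4A^4}{(A^2+M^2)^3}\, I$, the convexity of $G_p - \frac{2A^4}{(A^2+M^2)^3}(x^2+y^2)$ on $B_M$ will follow from the pointwise lower bound $\lambda_{\min}(D^2 h) \ge \frac{4A^4}{(A^2+M^2)^3}$ on $B_M$.

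To establish this bound, I extract from the trace and determinant of $D^2 h$ the explicit formula
\[
\lambda_{\min}(D^2 h) = \frac{4A}{x\bigl[(A^2+s) + \sqrt{(A^2+s)^2 - 4A^2 x^2}\bigr]}, \qquad s := x^2+y^2.
\]
The obvious estimate $\sqrt{(A^2+s)^2 - 4A^2 x^2} \le A^2+s$, together with $x \le \sqrt s \le M$ and $s \le M^2$ on $B_M$, gives $\lambda_{\min}(D^2 h) \ge \frac{2A}{x(A^2+s)} \ge \frac{2A}{M(A^2+M^2)}$. The lemma then reduces to the clean algebraic inequality $(A^2+M^2)^2 \ge 2A^3 M$, which I record via the identity
\[
(A^2+M^2)^2 - 2A^3M = A^2(A-M)^2 + M^2(A^2+M^2) \ge 0.
\]
The main obstacle is psychological: the elementary bounds used above for $\lambda_{\min}(D^2 h)$ look too lossy to reproduce the precise constant $\frac{2A^4}{(A^2+M^2)^3}$; the payoff is that this constant is calibrated exactly so that the nonnegativity identity above closes the gap.
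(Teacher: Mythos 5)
Your proof is correct, and for the quantitative (second) assertion it takes a genuinely different route from the paper. You reduce everything to the Hessian of the base function $h = A/x + x/A + y^2/(Ax)$: the identity $D^2 G_p = p\,h^{p-1}\,D^2 h + p(p-1)\,h^{p-2}\,Dh\,Dh^T$, together with the one-line observation $p\,h^{p-1}\ge p\cdot 2^{p-1}\ge 1$, gives $D^2 G_p \succeq D^2 h$ in the Loewner order, so it suffices to bound $\lambda_{\min}(D^2 h)$ from below -- and that you do via a closed-form expression coming from the pleasantly simple trace $2(A^2+x^2+y^2)/(Ax^3)$ and determinant $4/x^4$ of $D^2 h$. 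The paper instead applies the elementary bound $\lambda_{\min}\ge \det/\operatorname{trace}$ directly to $D^2 G_p$ itself, asserting (without displaying the computation) that $\det(D^2 G_p)/\operatorname{tr}(D^2 G_p) > \frac{4p}{p+1}\,G_p\,\frac{A^4}{(A^2+x^2+y^2)^3}$ and then using $G_p\ge 2^p\ge 1+1/p$ to discard the $p$-dependent prefactor. Your version has the advantage that all the nontrivial algebra occurs at the level of the quadratic $D^2 h$, whose invariants are trivial to compute, with the $p$-dependence absorbed cleanly at the start; the final reduction to the algebraic inequality $(A^2+M^2)^2\ge 2A^3M$, certified by the sum-of-squares identity $(A^2+M^2)^2-2A^3M=A^2(A-M)^2+M^2(A^2+M^2)$, is a nice way to close the argument. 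For the first (qualitative convexity) assertion, your direct trace--determinant computation on $D^2 h$ is essentially equivalent to the paper's decomposition of $h$ into the two convex pieces $A/x + x/A$ and $y^2/(Ax)$. Both proofs establish convexity (not strictness) of $G_p[A]-\frac{2A^4}{(A^2+M^2)^3}(x^2+y^2)$, which is all the lemma asks.
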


\begin{proof}
The function $x \to \frac{A}{x}+\frac{x}{A}$ is strictly convex and positive
valued on $\left(0,\infty\right)\times\mathbb{R}.$ The map $\left(x,y\right)\to\frac{1}{A}\frac{y^2}{x}$
is convex and positive on $\left(0,\infty\right)\times\mathbb{R}.$
Indeed, its Hessian has eigenvalues $0$ and $\frac{2}{A}\frac{x^{2}+y^{2}}{x^{3}}.$
The sum of two convex (and positive valued) functions is convex (and
positive valued), and the composition of it with $z\to z^{p}$, a monotonically
increasing and convex function on $\left(0,\infty\right)$, results
in a convex (positive valued) function.

To establish the second assertion, we compute lower bounds for $D^{2}G_{p}\left[A\right]$.
It is a fact that the lowest eigenvalue of a symmetric positive definite
matrix is bounded below by the quotient of the determinant over the
trace. We compute that for $p\geq1$, 
\[
\frac{\det\left(D^{2}G_{p}\left[A\right]\right)}{\text{tr}\left(D^{2}C_{p}\left[A\right]\right)}>\frac{4p}{p+1}G_{p}\left[A\right]\frac{A^{4}}{\left(A^{2}+x^{2}+y^{2}\right)^{3}}\geq4\frac{A^{4}}{\left(A^{2}+x^{2}+y^{2}\right)^{3}}.
\]
In particular, on the ball $B_{M}=\{(x,y)~:~x^{2}+y^{2}<M^{2}~\}$
we have 
\[
D^{2}G_{p}\left[A\right](x,y)>\frac{4A^{4}}{\left(A^{2}+M^{2}\right)^{3}}I~.
\]
This immediately leads to the second assertion of the lemma. 
\end{proof}
We are now ready for the proof of Proposition \ref{prop:ConvexFS}.
\begin{proof} Given $u\in \mathcal{C}_{n}$,
we define 
\[
P_{V}\left(Du\right)=Du\cdot\frac{V}{\left|V\right|},\text{ and }P_{V^{\perp}}\left(Du\right)=Du\cdot\frac{JV}{\left|V\right|}~.
\]
Then 
\begin{eqnarray*}
\left(\frac{\left|Du\right|^{2}+\left|V\right|^{2}}{Du\cdot V}\right)^p&=&\left(\frac{\left|V\right|}{P_{V}\left(Du\right)}+\frac{P_{V}\left(Du\right)}{\left|V\right|}+\frac{P_{V}\left(Du\right)}{\left|V\right|}\left(\frac{P_{V^{\perp}}\left(Du\right)}{P_{V}\left(Du\right)}\right)^{2}\right)^p \\
&=&G_p[\,|V|\,]\left(P_V(Du),P_{V^\perp}(Du)\right)~.
\end{eqnarray*}
Note that $\mathcal{C}_{N_0}\neq\emptyset$ implies $\inf\left|V\right|>0$. On
$\mathcal{C}_{n}$, $\left|P_{V}\left(Du\right)\right|^{2}+\left|P_{V^{\perp}}\left(Du\right)\right|^{2}\leq n^{2}$, and therefore
for any $u,v \in \mathcal{C}_{n}$, $n\ge N_0$, and any $\tau \in [0,1]$
\begin{align*}
 & G_{p}\left[\,|V|\,\right]\left(P_{V}\left(D\left(\tau u+\left(1-\tau\right)v\right)\right),P_{V^{\perp}}\left(D \left(\tau u+\left(1-\tau\right)v\right)\right)\right)\\
\leq\tau & G_{p}\left[\,|V|\,\right]\left(P_{V}\left(Du\right),P_{V^{\perp}}\left(Du\right)\right)+\left(1-\tau\right)G_{p}\left[\, |V| \,\right]\left(P_{V}\left(Dv\right),P_{V^{\perp}}\left(Dv\right)\right)\\
- & \tau\left(1-\tau\right)K\left|D\left(u-v\right)\right|^{2}~,
\end{align*}
with
\[
K=\frac{2\inf\left|V\right|^{4}}{\left(n^{2}+\sup\left|V\right|^{2}\right)^{3}}>0~.
\]
For $u,v\in \mathcal{C}_{n}$, and $\tau \in\left[0,1\right]$, we thus get
\[
F_p\left(\tau u+\left(1-\tau\right)v\right)\leq\tau F_p\left(u\right)+\left(1-\tau\right)F_p\left(v\right)-\tau\left(1-\tau\right)K\int_{\dom}\left|D\left(u-v\right)\right|^{2}~dx~,
\]
and so $F_p$ is strictly convex on $\mathcal{C}_{n}$. In regards to continuity, let $u_m$ be a sequence in $\mathcal{C}_n$ with $u_{m}\to u$ in the $C^1$ topology. Then the functions
\[
x\to G_{p}\left[\,|V|\,\right]\left(P_{V}\left(Du_{m}\right),P_{V^{\perp}}\left(Du_{m}\right)\right)(x)
\]
are measurable, non negative, uniformly bounded, and converge pointwise to the function
\[
x\to G_{p}\left[\,|V|\,\right]\left(P_{V}\left(Du\right),P_{V^{\perp}}\left(Du\right)\right)(x)~.
\]
Thanks to the Lebesgue Dominated Convergence Theorem, this implies 
\[
\lim F_p\left(u_{m}\right)=F_p\left(u\right)~.
\]
Since $\mathcal{C}_{n}$ is compact with respect to the $C^1$ topology, the $C^1$ continuity of $F_p$ implies the existence of a minimizer. The convexity of $\mathcal{C}_{n}$  and the strict convexity of $F_p$ yields the uniqueness of the minimizer. A computation
shows that for any $u\in \mathcal{C}_{n}$, $F_p$ is Gâteaux-differentiable at $u$,
and its differential is given by 
\begin{align*}
 & \left\langle DF_p(u),h\right\rangle \\
 & =\int_{\dom}p\left(\frac{\left|Du\right|^{2}+\left|V\right|^{2}}{Du\cdot V}\right)^{p-1}\left(\frac{2Du}{Du\cdot V}-\frac{\left|Du\right|^{2}+\left|V\right|^{2}}{\left(Du\cdot V\right)^{2}}V\right)\cdot Dh \, \text{d}x~,
\end{align*}
for $h\in C^{1}$.
Note that $u\in \mathcal{C}_{n}$ is the unique minimizer if and only if for
all $v\in \mathcal{C}_{n}$ there holds 
\begin{equation}
\left\langle DF_p(u),v-u\right\rangle \geq0.\label{eq:minimum}
\end{equation}
If the minimizer lies in the interior of $\mathcal{C}_{n}$, \eqref{minimum}
implies 
\[
\left\langle DF_p(u),h\right\rangle =0
\]
for all $h\in C^{2,\alpha}\cap \{ h=0 \text{ on } C_\epsilon \text{ and } C_1~\}$, if $\mathcal{C}= \mathcal{C}_\psi$, and for all $h\in C^{2,\alpha}\cap \{ h=0 \text{ on } C_1~\}$, if $\mathcal{C}= \mathcal{C}_\theta$\, ; in other words, $u$ satisfies the Euler-Lagrange
equation \eqref{Euler-Lagrange-FP} (or \eqref{Euler-Lagrange-FP} and  \eqref{Euler-Lagrange-BC} when $\mathcal{C}= \mathcal{C}_\theta$). Conversely, if $w\in \mathcal{C} \cap \{ Du \cdot V >0 \hbox{ on } \overline{B_1}\setminus B_\epsilon \,\}$
satisfies \eqref{Euler-Lagrange-FP} (and \eqref{Euler-Lagrange-BC} if $\mathcal{C}=\mathcal{C}_\theta$), then, for some $N$, it lies in $\mathcal{C}_n$ for all $n\ge N$, and it satisfies $\left\langle DF_p(w),v-w\right\rangle =0$ (in particular $\ge 0$) for all $v \in \mathcal{C}_n$; $w$ is thus the unique minimizer of $F_p$ in $\mathcal{C}_n$ for any $n\ge N$. It follows immediately that $w$ is a minimizer of $F_p$ in  $\mathcal{C} \cap \{ Du \cdot V >0 \hbox{ on } \overline{B_1}\setminus B_\epsilon \,\}$. The uniqueness of this minimizer follows from the strict convexity of $F_p$ on $\mathcal{C}_n$ for any $n$.
\end{proof}
\begin{cor}
\label{cor:6}
A global $C^{2,\alpha}$ minimizer $(\psi,\theta)$ of $I_p$, subject to $\psi = -\log 2$ at $|x|=\epsilon$, $\psi= 0$ and $\theta=\arg$ at $|x|=1$, and $\det\left(D\psi,D\theta\right) >0$
on $\overline{B_{1}}\setminus B_{\epsilon}$, satisfies 
\[
\text{div}\left(\left(\frac{\left|D\psi\right|^{2}+\left|D\theta\right|^{2}}{\det\left(D\psi,D\theta\right)}\right)^{p}\left(\frac{2D\psi}{\left|D\psi\right|^{2}+\left|D\theta\right|^{2}}+\frac{JD\theta}{\det\left(D\psi,D\theta\right)}\right)\right)=0,
\]
and 
\[
\text{div}\left(\left(\frac{\left|D\psi\right|^{2}+\left|D\theta\right|^{2}}{\det\left(D\psi,D\theta\right)}\right)^{p}\left(\frac{2D\theta}{\left|D\psi\right|^{2}+\left|D\theta\right|^{2}}-\frac{JD\psi}{\det\left(D\psi,D\theta\right)}\right)\right)=0.
\]
Furthermore, 
\[
\left(\frac{\left|D\psi\right|^{2}+\left|D\theta\right|^{2}}{\det\left(D\psi,D\theta\right)}JD\psi-2D\theta\right)\cdot\frac{x}{\left|x\right|}=0\text{ on }\left\{ \left|x\right|=\epsilon\right\} .
\]
\end{cor}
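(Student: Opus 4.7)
The plan is to apply Proposition \ref{prop:ConvexFS} twice, freezing one of $\psi$ or $\theta$ at a time. This exploits the two equivalent rewrites $\mathrm{trace}\,\Phi_*[I](\Phi(x)) = \frac{|Du|^2+|V|^2}{Du\cdot V}$ set up just before the proposition, namely $(u,V) = (\psi,-JD\theta)$ and $(u,V) = (\theta,JD\psi)$. In each case one checks directly that $D\psi\cdot(-JD\theta) = D\theta\cdot JD\psi = \det(D\psi,D\theta)$, while $|{-}JD\theta|^2 = |D\theta|^2$ and $|JD\psi|^2 = |D\psi|^2$, so $F_p$ of Proposition \ref{prop:ConvexFS} coincides with $I_p$ under either substitution.

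First I would fix $\theta$ and regard $I_p$ as $F_p$ acting on $\psi \in \mathcal{C}_\psi$ with $V = -JD\theta \in C^0$. Since $(\psi,\theta)$ is a global minimizer of $I_p$, $\psi$ is in particular a minimizer of this $F_p$ over $\mathcal{C}_\psi \cap \{Du\cdot V>0\}$. The hypotheses $\psi \in C^{2,\alpha}$ and $\det(D\psi,D\theta) > 0$ on the compact annulus $\overline{B_1}\setminus B_\epsilon$ give $\|\psi\|_{C^{2,\alpha}} \leq N_0$ and $\det(D\psi,D\theta) \geq 1/N_0$ for some $N_0 \geq 1$, so $\psi$ lies in $\mathrm{int}(\mathcal{C}_n)$ for every $n \geq 2N_0$, and Proposition \ref{prop:ConvexFS} delivers the Euler--Lagrange equation \eqref{Euler-Lagrange-FP}. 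Writing $T := \frac{|D\psi|^2+|D\theta|^2}{\det(D\psi,D\theta)}$ and using the identities $T^{p-1}\cdot\frac{2D\psi}{\det(D\psi,D\theta)} = T^p\cdot\frac{2D\psi}{|D\psi|^2+|D\theta|^2}$ and $T^{p-1}\cdot\frac{T\,JD\theta}{\det(D\psi,D\theta)} = T^p\cdot\frac{JD\theta}{\det(D\psi,D\theta)}$, equation \eqref{Euler-Lagrange-FP} reorganizes to the first displayed divergence identity.

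Next I would fix $\psi$ and regard $I_p$ as $F_p$ acting on $\theta \in \mathcal{C}_\theta$ with $V = JD\psi \in C^0$. The identical interior argument places $\theta$ in $\mathrm{int}(\mathcal{C}_n)$ for $n$ large, yielding both \eqref{Euler-Lagrange-FP} and --- since $\theta$ carries no prescribed value on $C_\epsilon$ within $\mathcal{C}_\theta$ --- the supplementary boundary condition \eqref{Euler-Lagrange-BC}. The same algebraic reshuffling turns \eqref{Euler-Lagrange-FP} into the second divergence identity. Multiplying \eqref{Euler-Lagrange-BC} through by $\det(D\psi,D\theta) > 0$ then produces exactly the stated condition $\bigl(\tfrac{|D\psi|^2+|D\theta|^2}{\det(D\psi,D\theta)}JD\psi - 2D\theta\bigr)\cdot \tfrac{x}{|x|} = 0$ on $\{|x| = \epsilon\}$.

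The only mildly subtle point is the interior claim needed to pass from the variational inequality provided by Proposition \ref{prop:ConvexFS} to an unconstrained Euler--Lagrange equation; this is immediate from the assumed $C^{2,\alpha}$ regularity of $(\psi,\theta)$ together with the strict positivity of $\det(D\psi,D\theta)$ on the compact annulus. Everything else reduces to routine algebraic matching between the two equivalent forms of the Euler--Lagrange system.
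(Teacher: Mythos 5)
Your proof is correct and follows essentially the same approach as the paper: freeze one variable at a time, apply Proposition \ref{prop:ConvexFS} with the two substitutions $(u,V)=(\psi,-JD\theta)$ and $(u,V)=(\theta,JD\psi)$ (confirming $Du\cdot V=\det(D\psi,D\theta)$ in both cases), invoke the interior-point criterion using $C^{2,\alpha}$ regularity and the strict positivity of $\det(D\psi,D\theta)$ on the compact annulus, and then algebraically reorganize \eqref{Euler-Lagrange-FP} and \eqref{Euler-Lagrange-BC} into the stated forms. You supply slightly more detail than the paper does on why the minimizer lies in $\mathrm{int}(\mathcal{C}_n)$ and on the algebraic rearrangement, but the argument is the same.
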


\begin{proof}
The $\psi$ component of this global minimizer automatically lies in $\text{int}(\mathcal{C}_n)$ with $\mathcal{C}=\mathcal{C}_\psi$ and $V= -J D\theta$ for some $n$, and it is a minimizer of $F_p$ in $\mathcal{C}_n$. The first equation of this corollary is now simply the Euler-Lagrange \eqref{Euler-Lagrange-FP} for such a minimizer. Similarly, the $\theta$ component of this global minimizer  lies in $\text{int}(\mathcal{C}_n)$ with $\mathcal{C}=\mathcal{C}_\theta$ and $V= J D\psi$ for some $n$, and is a minimizer of $F_p$ in $\mathcal{C}_n$. The two last equations of this corollary are simply the Euler-Lagrange \eqref{Euler-Lagrange-FP} and the boundary condition (\ref{eq:Euler-Lagrange-BC}) satisfied by such a minimizer.  
\end{proof}

\begin{cor}
\label{cor:7}Let $f_p$ be the function introduced in Proposition \ref{1dE-L}. The transformation $x \to f_p(|x|)\frac{x}{|x|}$, or rather the function pair $(f_p(|x|), \arg(x))$ satisfies the three Euler-Lagrange equations from Corollary \ref{cor:6}. As a consequence
\begin{equation}
\label{firstineq}
I_{p}(f_p(|x|)\frac{x}{|x|}) \le I_{p}(\psi(x)\frac{x}{|x|})~, 
\end{equation}
for any $\psi \in \mathcal{C}_\psi\cap \{ D\psi(x)\cdot \frac{x}{|x|}>0 \hbox{ on } \overline{B_1}\setminus B_\epsilon \,\}$~. The last two Euler-Lagrange equations from Corollary \ref{cor:6} are actually satisfied by any pair $(f(|x|),\arg(x))$, with $f \in \{f \in C^{2,\alpha}([\epsilon,1])~:~ f'>0~, f(\epsilon)=-\log 2~, f(1)=0~\}$. As a consequence
\begin{equation}
\label{secondineq}
I_p(f_p(|x|)\frac{x}{|x|})\le I_p(f(|x|)\frac{x}{|x|})\le I_p(f(|x|)\phi(x))~,
\end{equation}
for any $\phi(x)=(\cos(\theta(x)),\sin (\theta(x))^t$, with $\theta\in \mathcal C_\theta \cap \{ D\theta \cdot J\frac{x}{|x|}>0 \hbox{ on } \overline{B_1}\setminus B_\epsilon \,\}$ and any $f \in \{f \in C^{2,\alpha}([\epsilon,1])~:~ f'>0~, f(\epsilon)=-\log 2~, f(1)=0~\}$.
\end{cor}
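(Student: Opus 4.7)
The plan is to verify the three Euler--Lagrange conditions of Corollary \ref{cor:6} for the pair $(f_p(|x|), \arg(x))$ by direct computation, and then invoke the converse direction of Proposition \ref{prop:ConvexFS} twice to deduce the two chains of inequalities.

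\textbf{Step 1: Setting up the radial computation.} I will substitute $\psi(x)=f(|x|)$ and $\theta(x)=\arg(x)$, so that writing $r=|x|$, $\text{e}_r=x/|x|$, $\text{e}_\theta = J\,x/|x|$,
\[
D\psi = f'(r)\,\text{e}_r,\qquad D\theta = \frac{1}{r}\,\text{e}_\theta,\qquad JD\psi = f'(r)\,\text{e}_\theta,\qquad JD\theta = -\frac{1}{r}\,\text{e}_r,
\]
and $\det(D\psi,D\theta) = f'(r)/r$, $|D\psi|^2+|D\theta|^2 = (f'(r))^2 + 1/r^2$. In particular,
\[
\frac{|D\psi|^{2}+|D\theta|^{2}}{\det(D\psi,D\theta)} = r f'(r) + \frac{1}{r f'(r)}~.
\]

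\textbf{Step 2: The second equation and the boundary condition are free.} Because both $JD\psi$ and $D\theta$ are scalar multiples of $\text{e}_\theta$, the vector field inside the second divergence in Corollary \ref{cor:6} has the form $g(r)\,\text{e}_\theta$ for some scalar function $g$; a direct calculation (or symmetry under rotation) shows $\operatorname{div}\bigl(g(r)\,\text{e}_\theta\bigr)=0$. Likewise, on $C_\epsilon$ the vector $\bigl(\tfrac{|D\psi|^{2}+|D\theta|^{2}}{\det(D\psi,D\theta)}\,JD\psi - 2D\theta\bigr)$ is a multiple of $\text{e}_\theta$, which is orthogonal to $\text{e}_r = x/|x|$, so the boundary condition holds trivially. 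These two observations are valid for \emph{any} admissible radial $f$, which is the content of the second assertion of the corollary.

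\textbf{Step 3: The first equation reduces to the one-dimensional Euler--Lagrange equation.} For a radial $f$, the inner vector in the first divergence is radial: a short algebraic simplification shows it equals $\bigl(T(r)\bigr)^{p-1}\bigl(r - \tfrac{1}{r(f'(r))^2}\bigr)\text{e}_r / r$ (up to the overall factor pulled through), where $T(r)=rf'+1/(rf')$. Since $\operatorname{div}(h(r)\,\text{e}_r) = \tfrac{1}{r}(rh(r))'$, the PDE becomes the ODE
\[
\Bigl(T(r)^{p-1}\bigl(r^2 - (f'(r))^{-2}\bigr)\Bigr)' = 0\quad\text{on }[\epsilon,1],
\]
which is exactly the Euler--Lagrange equation (E-L) of Proposition \ref{1dE-L}. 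Taking $f=f_p$, whose defining property is precisely that it solves this ODE in $\mathcal{C}$, all three conditions of Corollary \ref{cor:6} are met by $(f_p(|x|),\arg(x))$.

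\textbf{Step 4: Deducing the inequalities via Proposition \ref{prop:ConvexFS}.} Fix $\theta=\arg$ and $V=-JD\theta=\text{e}_r/r$; then $\inf|V|>0$ and the pair $(f_p(|x|),\arg(x))$ provides a solution of the Euler--Lagrange equation \eqref{eq:Euler-Lagrange-FP} lying in $\mathcal{C}_\psi \cap \{D\psi\cdot V>0\}$ (positivity because $D\psi\cdot V = f_p'(r)/r>0$). The converse part of Proposition \ref{prop:ConvexFS} then gives $f_p(|x|)$ as the unique minimizer of $F_p$ on this set, which is exactly \eqref{firstineq}. For \eqref{secondineq}, the left inequality is the special case $\psi(x)=f(|x|)$ of \eqref{firstineq}. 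For the right inequality, fix $\psi(x)=f(|x|)$, take $V=JD\psi=f'(r)\text{e}_\theta$ (again $\inf|V|>0$ since $f'>0$ on the compact interval $[\epsilon,1]$), and observe that $\arg$ lies in $\mathcal{C}_\theta \cap \{D\theta\cdot V>0\}$ and satisfies the Euler--Lagrange equation together with the boundary condition on $C_\epsilon$ by Steps 2--3; the converse part of Proposition \ref{prop:ConvexFS} then identifies $\arg$ as the unique minimizer of $F_p$ in this set, yielding $I_p(f(|x|)\tfrac{x}{|x|})\le I_p(f(|x|)\phi(x))$.

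\textbf{Main obstacle.} The conceptual content is already packaged in Proposition \ref{prop:ConvexFS} and Proposition \ref{1dE-L}, so the only delicate point is the algebraic verification in Step 3 that the 2D divergence equation genuinely collapses to the 1D ODE. This is routine but slightly fiddly, and one must also be careful that the positivity condition $Du\cdot V>0$ (needed to apply the converse part of Proposition \ref{prop:ConvexFS}) holds on the \emph{closed} annulus for each of the two fixings of $V$; both cases are immediate here because $f_p', f', 1/r$ are all bounded below by positive constants on $[\epsilon,1]$.
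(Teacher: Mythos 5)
Your proposal is correct and follows the same approach as the paper: verify by direct radial calculation that the pair $(f_p(|x|),\arg(x))$ (respectively $(f(|x|),\arg(x))$) satisfies the relevant Euler--Lagrange equations of Corollary~\ref{cor:6}, then invoke the converse part of Proposition~\ref{prop:ConvexFS} twice, once with $V=-JD\arg=\tfrac{1}{|x|}\tfrac{x}{|x|}$ and once with $V=JDf(|x|)=f'(|x|)J\tfrac{x}{|x|}$. The paper's proof states only ``direct calculations verify\dots''; you have supplied those calculations (the stray factor $1/r$ in the Step~3 intermediate expression is a typo, but the resulting ODE you write matches (E-L) exactly), so the argument is the same, just more explicit.
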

\begin{proof}
Direct calculations verify that the first Euler-Lagrange equation from Corollary \ref{cor:6} is satisfied by $(f_p(|x|), \arg(x))$, and that the last two Euler-Lagrange equations from Corollary \ref{cor:6} are satisfied by any pair $(f(|x|),\arg(x))$, with $f \in \{f \in C^{2,\alpha}([\epsilon,1])~:~ f'>0~, f(\epsilon)=-\log 2~, f(1)=0~\}$. The inequality (\ref{firstineq}) now follows immediately from the last statement in Proposition \ref{prop:ConvexFS} in the case $\mathcal{C}= \mathcal{C}_\psi$ and $V=-JD\arg(x)= \frac{1}{|x|}\frac{x}{|x|}$. The first inequality in (\ref{secondineq}) is a direct consequence of (\ref{firstineq}). The second inequality follows from the last statement in Proposition \ref{prop:ConvexFS} in the case $\mathcal{C}= \mathcal{C}_\theta$ and $V= JDf(|x|)= f'(|x|)J\frac{x}{|x|}$.
\end{proof}

\begin{figure}
     \begin{tikzpicture}
        [
        squarednode/.style={%
            ellipse,  %rectangle,
            draw=black!60,
            fill=white,
            very thick,
            minimum size=5mm,
            text centered,
            text width=3cm,
           % node distance=2cm
        }
        ]
        %Nodes
        \node[squarednode]      (radial)                              {\[F_p\left( e^{\psi(x)}\frac{x}{\left|x\right|} \right)\]};
        \node[squarednode]      (general)       [above=of radial] {\[F_p\left( e^{\psi(x)}\varphi(x)               \right)\]};
        \node[squarednode]      (minimal)       [right=of radial] {\[F_p\left(e^{f_p\left(\left|x\right|\right)}\frac{x}{\left|x\right|}\right)\]};
        \node[squarednode]      (radial2)       [above=of minimal] {\[F_p\left(e^{f\left(\left|x\right|\right)}\varphi(x)\right)\]};

        %Lines
        \draw[thick,->] (radial.east) -- node [above,midway] {$\geq$} (minimal.west);
        \draw[thick,->] (radial2.south) -- node [above,midway,sloped] {$\geq$}(minimal.north);
        \end{tikzpicture}
\caption{Illustration of the conclusions of Corollary~\ref{cor:7}. }

\end{figure}
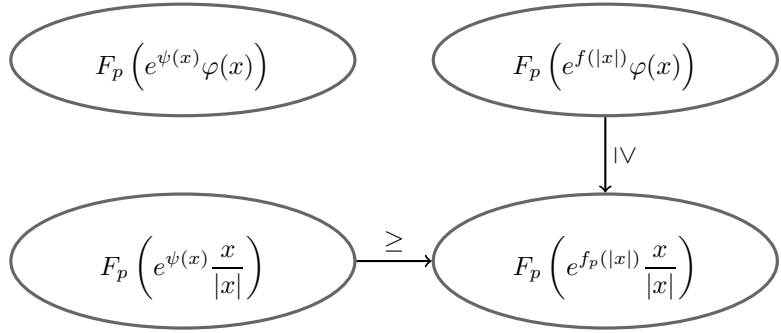

\section{Optimal cloaks for simply connected domains}

So far our study has focused on the situation where the cloaks are constructed from diffeomorphisms of the classical annulus $\overline{B_1}\setminus B_\epsilon$ to the classical annulus $\overline{B_1}\setminus B_{\frac{1}{2}}$, and the corresponding push-forwards of the identity matrix. In a more general setting, one could consider instead three simply
connected domains, $\omega_{\epsilon}\subset\omega_{\frac{1}{2}}\subset\Omega$
containing the origin (where $\omega_{\epsilon}$ is comparable to
$B_{\epsilon}$) and a bijective diffeomorphism $\Psi_{\epsilon}: \overline{\Omega}\setminus \omega_{\epsilon} \to \overline{\Omega}\setminus \omega_{\frac12}$,
such that $\Psi_{\epsilon}=Id$ on $\partial\Omega$ and $\Psi_{\epsilon}\left(\partial \omega_{\epsilon}\right)=\partial \omega_{\frac{1}{2}}$. As before, the material parameters of the cloak would be the push-forward of $I$ by $\Psi_\epsilon$. Any smooth globally minimizing transformation would still satisfy the Euler-Lagrange equations of Corrollary \ref{cor:6}, if we continue to use the energy $I_p$.

\vskip 10pt
\noindent
The goal of this section is to show that for general geometries one should (naturally) not expect the optimal transformations to be radial. As we demonstrate this, we also derive a process for the construction of optimal transformations (based on a slightly revised energy). Suppose $\Omega$ is a bounded, smooth, simply connected domain containing the origin. Due to the Riemann Mapping Theorem, there exists a unique (complex)
analytic map $\Psi$ such that $\Psi\left(0\right)=0$, $D\Psi\left(0\right)=aI$
for some $a>0$ and $\Psi$ is a one-to-one mapping from $\overline{\Omega}$ onto
$\overline{B_{1}}$. By the maximum modulus principle $\min\{ |x|\,:\, x \in \overline{\Omega}\} \le 1/a \le \max\{ |x|\,:\,x \in \overline{\Omega}\}$. Set $\omega_{\epsilon}=\Psi^{-1}\left(B_{\epsilon}\right)$, and $\omega_{\frac{1}{2}}=\Psi^{-1}\left(B_{\frac{1}{2}}\right).$ By construction, $0\in\omega_{\epsilon}\subset\omega_{\frac{1}{2}}\subset\Omega$~. 
Provided $\epsilon$ is small enough, $\omega_{\epsilon}$ is approximately
$B_{\frac{\epsilon}{a}}$, in the sense that 
\[
\text{\ensuremath{\forall x\in C_{\epsilon} ~~~\left|\Psi^{-1}\left(x\right)-\frac{x}{a}\right|\le \frac{1}{2}\max_{\overline{B_{1/2}}}\left|D^{2}\text{\ensuremath{\Psi}}^{-1}\right|\epsilon^{2}~.}}
\]

\vskip 10pt
\noindent
Given $\Phi_{\epsilon}\in C^{1}\left(\overline{B_{1}}\setminus B_{\epsilon};\overline{B_{1}}\setminus B_{\frac{1}{2}}\right)$
a (possibly optimal) bijective diffeomorphism with $\left.\Phi_{\epsilon}\right|_{C_{1}}=Id~,\text{ and }\Phi_{\epsilon}\left(C_{\epsilon}\right)=C_{\frac{1}{2}}~$, we define
\begin{equation}
\Psi_{\epsilon}:=\Psi^{-1}\circ\Phi_{\epsilon}\circ\Psi.\label{eq:defnPsieps}
\end{equation}

\begin{figure}
%%%
%
\tikzset{declare function={partx(\x,\y)  =                0.5*sinh(2*\x*cos(deg(\y)));}}
\tikzset{declare function={party(\x,\y)  =                0.5*sin(deg(2*\x*sin(deg(\y))));}}
\tikzset{declare function={denoma(\x,\y) = cosh(\x*cos(deg(\y)))^2 - sin(deg(\x*sin(deg(\y))))^2;}}
\tikzset{declare function={funcy(\x,\y)  =             party(\x,\y)/denoma(\x,\y);}}
\tikzset{declare function={funcx(\x,\y)  =             partx(\x,\y)/denoma(\x,\y);}}
\tikzset{declare function={sinhx(\x,\y)  =                sinh(\x*cos(deg(\y)))*cos(deg(\x*sin(deg(\y));}}
\tikzset{declare function={sinhy(\x,\y)  =                cosh(\x*cos(deg(\y)))*sin(deg(\x*sin(deg(\y));}}
\tikzset{declare function={Engel(\x,\y)  =        2*pi*\x/\y+ pi/\y;}}
\makeatletter
\tikzset{%
  prefix node name/.code={%
    \tikzset{%
      name/.code={\edef\tikz@fig@name{#1 ##1}}
    }%
  }%
}
\makeatother
       \begin{tikzpicture}
       \begin{scope}[shift={(4.5cm,0.1cm)}, prefix node name=BR] 
        \node (xu) at ( 1.5,0) {};
        \node (xb) at (-1.5,0) {};
        \node (yu) at (0,1.5) {};
        \node (yb) at (0,-1.5) {};
        \draw  ( 1,-1pt) -- (1 ,1pt);
        \draw  (-1pt, 1) -- (1pt, 1);
        \draw  (-1pt,-1) -- (1pt,-1);
        \draw  (-1,-1pt) -- (-1,1pt) ;  
        \draw[->]  (-1.5,0) -- (1.5,0) node[above]  {$x$};
        \draw[->]  (0,-1.5) -- (0,1.5) node[left]   {$y$};
        \foreach \x in {0.5,1}
        \draw[domain=0:2*pi,samples=200,  blue, variable=\t]  plot ({sinhx(\x,\t)},{sinhy(\x,\t)});
        \foreach \f in {0,1,...,18}
        \draw[domain=0.5:1,samples=40,  red, variable=\t]  plot ({sinhx(\t,Engel(\f,19))},{sinhy(\t,Engel(\f,19))});
       \end{scope}
      \begin{scope}[shift={(0.1cm,0.1cm)},, prefix node name=BL] 
        \node (xu) at ( 1.5,0) {};
        \node (xb) at (-1.5,0) {};
        \node (yu) at (0,1.5) {};
        \node (yb) at (0,-1.5) {};
        \draw  ( 1,-1pt) -- (1 ,1pt);
        \draw  (-1pt, 1) -- (1pt, 1);
        \draw  (-1pt,-1) -- (1pt,-1);
        \draw  (-1,-1pt) -- (-1,1pt) ;  
        \draw[->]  (-1.5,0) -- (1.5,0) node[above]  {$x$};
        \draw[->]  (0,-1.5) -- (0,1.5) node[left]   {$y$};
        \foreach \x in {0.1,1}
        \draw[domain=0:2*pi,samples=400,  blue, variable=\t]  plot ({sinhx(\x,\t)},{sinhy(\x,\t)});
         \foreach \f in {0,1,...,18}
        \draw[domain=0.1:1,samples=40,  green, variable=\t]  plot ({sinhx(\t,Engel(\f,19))},{sinhy(\t,Engel(\f,19))});
     \end{scope}
        \begin{scope}[shift={(4.5cm,4.5cm)},, prefix node name=TR] 
        \node (xu) at ( 1.5,0) {};
        \node (xb) at (-1.5,0) {};
        \node (yu) at (0,1.5) {};
        \node (yb) at (0,-1.5) {};
        \draw  ( 1,-1pt) -- (1 ,1pt);
        \draw  (-1pt, 1) -- (1pt, 1);
        \draw  (-1pt,-1) -- (1pt,-1);
        \draw  (-1,-1pt) -- (-1,1pt) ;  
        \draw[->]  (-1.5,0) -- (1.5,0) node[above]  {$x$};
        \draw[->]  (0,-1.5) -- (0,1.5) node[left]   {$y$};
       \foreach \x in {0.5,1}
       \draw[domain=0:2*pi,samples=200,  blue, variable=\t]  plot ({\x*cos(deg(\t))},{\x*sin(deg(\t))});
       \foreach \f in {0,1,...,18}
       \draw[domain=0.5:1,samples=40,  red, variable=\t]  plot ({\t*cos(deg(Engel(\f,19)))},{\t*sin(deg(Engel(\f,19)))});
       \end{scope}
       \begin{scope}[shift={(0.1cm,4.5cm)},, prefix node name=TL] 
        \node (xu) at ( 1.5,0) {};
        \node (xb) at (-1.5,0) {};
        \node (yu) at (0,1.5) {};
        \node (yb) at (0,-1.5) {};
        \draw  ( 1,-1pt) -- (1 ,1pt);
        \draw  (-1pt, 1) -- (1pt, 1);
        \draw  (-1pt,-1) -- (1pt,-1);
        \draw  (-1,-1pt) -- (-1,1pt) ;  
        \draw[->]  (-1.5,0) -- (1.5,0) node[above]  {$x$};
        \draw[->]  (0,-1.5) -- (0,1.5) node[left]   {$y$};
        \foreach \x in {0.1,1}
        \draw[domain=0:2*pi,samples=200,  blue, variable=\t] plot ({\x*cos(deg(\t))},{\x*sin(deg(\t))});
        \foreach \f in {0,1,...,18}
        \draw[domain=0.1:1,samples=40,  green, variable=\t]   plot ({\t*cos(deg(Engel(\f,19)))},{\t*sin(deg(Engel(\f,19)))});
       \end{scope}
       \draw[thick,->] (BL yu) -- node[right,midway] {$\Psi$} (TL yb) ;
       \draw[thick,<-] (BR yu) -- node[right,midway] {$\Psi^{-1}$} (TR yb) ;
       \draw[thick,->] (BL xu) .. controls (1.9cm, 0.0cm) and  (2.5cm, 0.0cm) .. (BR xb) node[midway,below] {$\Psi_{\epsilon}$};%   ;
       \draw[thick,->] (TL xu) .. controls (1.9cm, 4.6cm) and  (2.5cm, 4.6cm) .. (TR xb) node[midway,above] {$\Phi_{\epsilon}$};%   ;
       \end{tikzpicture}
      
%%%
%\centering{}\includegraphics{Tikz1}
\caption{\label{fig:Conformal}Cloaking by mapping where $\Omega=\sinh\left(B_{1}\right)$, with $\epsilon=1/10$. }
\end{figure}
\noindent
Figure~\ref{fig:Conformal} shows some of the ``rays" of the map $\Psi_{\epsilon}$ ($\Phi_\epsilon$ being radial) in the case $\Psi^{-1}=\sinh$, $\Omega=\sinh\left(B_{1}\right)$, $\omega_{\frac{1}{2}}=\sinh\left(B_{\frac{1}{2}}\right)$
and $\omega_{\epsilon}=\sinh\left(B_{\epsilon}\right).$ The green curves on the left are mapped to proper subsets of themselves, shown as red curves on the right. Clearly the transformation $\Psi_\epsilon$ is no longer radial.

\vskip 5pt
\noindent
For any $x\in\partial\Omega$, $\Psi\left(x\right)$ lies on  $C_{1}$, and thus
$\Phi_{\epsilon}\circ\Psi\left(x\right)=\Psi\left(x\right)$. It follows that 
$\Psi_{\epsilon}\left(x\right)=x$, in other words: $\Psi_{\epsilon}=Id$ on
$\partial\Omega$. Similarly, we obtain that  $\Psi_{\epsilon}\left(\partial \omega_{\epsilon}\right)=\partial \omega_{\frac{1}{2}}$. $\left(\Psi_\epsilon\right)_{\star}[I]$ therefore produces an approximate cloak (with same approximate invisibility as that of $(\Phi_\epsilon)_{\star}[I]$). From composition of transformations we obtain 
\[
\left(\Psi_{\epsilon}\right)_{\star}\left[I\right]=\left(\Psi^{-1}\right)_{\star}\left[\left(\Phi_{\epsilon}\right)_{\star}\left[\Psi_{\star}\left[I\right]\right]\right]~.
\]

\begin{lem}\label{lem:conf}
There holds 
\[
\trace\left(\Psi_{\epsilon}\right)_{\star}[I]=\left(\trace\left(\Phi_{\epsilon}\right)_{\star}[I]\right)\circ \Psi~.
\]

\end{lem}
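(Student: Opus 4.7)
The plan is to exploit the composition identity $(\Psi_\epsilon)_\star[I] = (\Psi^{-1})_\star\bigl[(\Phi_\epsilon)_\star[\Psi_\star[I]]\bigr]$ stated just before the lemma, together with the fact that both $\Psi$ and $\Psi^{-1}$ are biholomorphisms. In two real dimensions, the differential of a complex analytic map is a scaled rotation: if $\Psi'(z) = a+ib$, then
\[
D\Psi(x) = \begin{pmatrix} a & -b \\ b & a \end{pmatrix} = \sqrt{|\det D\Psi(x)|}\,R(x),
\]
for some $R(x) \in SO(2)$, and in particular $D\Psi(x)D\Psi(x)^T = |\det D\Psi(x)|\,I$. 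This immediately gives $\Psi_\star[I](\Psi(x)) = I$, so the composition identity simplifies to $(\Psi_\epsilon)_\star[I] = (\Psi^{-1})_\star\bigl[(\Phi_\epsilon)_\star[I]\bigr]$.

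The next step is to understand how $(\Psi^{-1})_\star$ acts on an arbitrary symmetric positive-definite matrix field $M$. Writing $D\Psi^{-1}(y) = \sqrt{|\det D\Psi^{-1}(y)|}\,\widetilde R(y)$ with $\widetilde R(y) \in SO(2)$, the push-forward formula produces
\[
(\Psi^{-1})_\star[M](\Psi^{-1}(y)) = \frac{D\Psi^{-1}(y)\,M(y)\,D\Psi^{-1}(y)^T}{|\det D\Psi^{-1}(y)|} = \widetilde R(y)\, M(y)\, \widetilde R(y)^T,
\]
so the conformal factor cancels exactly against the Jacobian determinant. Since the trace is invariant under conjugation by orthogonal matrices, this yields $\trace (\Psi^{-1})_\star[M](\Psi^{-1}(y)) = \trace M(y)$.

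Applying this with $M = (\Phi_\epsilon)_\star[I]$ and substituting $y = \Phi_\epsilon(\Psi(x))$, so that $\Psi^{-1}(y) = \Psi_\epsilon(x)$, we obtain
\[
\trace (\Psi_\epsilon)_\star[I](\Psi_\epsilon(x)) = \trace (\Phi_\epsilon)_\star[I](\Phi_\epsilon(\Psi(x))).
\]
Reparametrizing by $z = \Psi_\epsilon(x) \in \overline{\Omega}\setminus \omega_{1/2}$, for which $\Psi(z) = \Phi_\epsilon(\Psi(x))$, this is precisely the claimed identity. The only real difficulty is the careful bookkeeping of which matrix field is evaluated at which point under which map; the algebraic content is driven entirely by the two-dimensional fact that a conformal Jacobian is a scaled rotation, together with the orthogonal invariance of the trace.
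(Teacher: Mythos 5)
Your proof is correct and follows essentially the same route as the paper: factor the conformal Jacobian $D\Psi$ as a positive scalar times a rotation, deduce that $\Psi_\star[I]=I$ and that $(\Psi^{-1})_\star$ acts by orthogonal conjugation (the conformal factor cancels against $|\det D\Psi^{-1}|$), and then invoke trace invariance under orthogonal conjugation together with the composition identity $(\Psi_\epsilon)_\star[I]=(\Psi^{-1})_\star[(\Phi_\epsilon)_\star[\Psi_\star[I]]]$. The only difference is cosmetic bookkeeping --- you parametrize by $y$ and then reparametrize to $z=\Psi_\epsilon(x)$, whereas the paper directly writes $(\Psi^{-1})_\star[A](x)=Q^T(x)A(\Psi(x))Q(x)$ --- but the substance is identical.
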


\begin{proof}
Since $\Psi$ is conformal, $D\Psi=\gamma Q$ with
$\gamma$ a positive scalar and $Q$ an orthogonal matrix. We are in 2d, and so this implies
\[
\Psi_{\star}\left[I\right]\left(y\right)=\frac{\left(D\Psi\right)\left(D\Psi\right)^{T}}{|\det D\Psi|}\circ\Psi^{-1}\left(y\right)=I~.
\]
Similarly, 
\begin{align*}
\left(\Psi^{-1}\right)_{\star}\left[A\right]\left(x\right) & =\frac{\left(D\Psi^{-1}\right)A\left(D\Psi^{-1}\right)^{T}}{|\det D\Psi^{-1}|}\circ\Psi\left(x\right)\\
 & =Q^T\left(x\right)A\left(\Psi\left(x\right)\right)Q\left(x\right)~,
\end{align*}
where we have used that $\left(D\Psi^{-1}\right)\left(\Psi(x)\right)= \left(D\Psi\right)^{-1}(x)= \frac{1}{\gamma} Q^T(x)$.
In summary, we conclude that $\left(\Psi_{\epsilon}\right)_{\star}[I]$ is given by the formula
\[
\left(\Psi_{\epsilon}\right)_{\star}\left[I\right]\left(x\right)=Q^T\left(x\right)\left(\Phi_{\epsilon}\right)_{\star}\left[I\right]\left(\Psi\left(x\right)\right)Q\left(x\right)~,
\]
and the statement about the traces follows.
\end{proof}

\noindent
If $\Phi_\epsilon$ is a transformation which minimizes the anisotropy of $(\Phi_\epsilon)_{\star}[I]$, using the measure $I_p$ for some $1\le p<\infty$, then it follows immediately from Lemma~\ref{lem:conf} above that $\Psi_\epsilon$ minimizes anistropy of $(\Psi_\epsilon)_{\star}[I]$, using the slightly modified measure
$$
\tilde I_p\left(\Psi_\epsilon \right) = \int_{\Omega \setminus \omega_{\epsilon}} \left(\trace (\Psi_\epsilon)_*[I]\right)^p(\Psi_\epsilon(x)) \, |\det \Psi(x)|~dx~.
$$ 
A similar statement holds for $p=\infty$. In that case there is no change in the measure of anisotropy.  
\vskip 10pt
{\bf Acknowledgements}
The research of MSV was partially supported by NSF Grant DMS-12-11330. Part of this work was carried out while MSV was visiting the University of Copenhagen and the Danish Technical University. This visit was made possible through support from the Nordea Foundation
and the Otto Mo \hskip -8pt /nsted Foundation. This study also contributes to the IdEx Universit\'e de Paris ANR-18-IDEX-0001.
\bibliographystyle{amsplain}
\bibliography{biblio}
\end{document}